\documentclass[12pt]{amsart}

\usepackage[letterpaper,margin=1in]{geometry}
\usepackage{amssymb,mathtools}
\usepackage{bm}
\usepackage{algorithm}
\usepackage[noend]{algpseudocode}
\usepackage{booktabs}
\usepackage{multirow}
\usepackage{microtype}
\usepackage{float}
\usepackage{tikz}
\usepackage{xcolor}
\usepackage[colorlinks=true,linkcolor=blue!55!black,citecolor=blue!55!black,urlcolor=blue!55!black]{hyperref}
\usepackage[nameinlink,noabbrev]{cleveref}

\newtheorem{theorem}{Theorem}[section]
\newtheorem{lemma}[theorem]{Lemma}
\newtheorem{corollary}[theorem]{Corollary}
\theoremstyle{definition}
\newtheorem{assumption}[theorem]{Assumption}
\theoremstyle{remark}

\numberwithin{equation}{section}
\numberwithin{algorithm}{section}

\newcommand{\R}{\mathbb{R}}
\newcommand{\M}{\underline{\mathbb{M}}}
\newcommand{\K}{\underline{\mathbb{K}}}
\newcommand{\V}{\bm{\mathcal{V}}}
\newcommand{\Z}{\bm{\mathcal{Z}}}
\newcommand{\T}{\mathcal{T}}
\newcommand{\D}{\mathcal{D}}
\newcommand{\F}{\mathcal{F}}
\newcommand{\Acal}{\mathcal{A}}
\newcommand{\as}{\operatorname{as}}
\newcommand{\skw}{\operatorname{skw}}
\newcommand{\tr}{\operatorname{tr}}
\newcommand{\curl}{\operatorname{curl}}
\newcommand{\range}{\operatorname{range}}
\newcommand{\rank}{\operatorname{rank}}
\newcommand{\diag}{\operatorname{diag}}
\newcommand{\jump}[1]{[\![#1]\!]}
\newcommand{\inner}[2]{\left(#1,#2\right)}
\newcommand{\norm}[1]{\left\lVert #1\right\rVert}
\newcommand{\trans}{\mathsf{T}}

\crefname{assumption}{Assumption}{Assumptions}
\Crefname{assumption}{Assumption}{Assumptions}
\crefname{algorithm}{Algorithm}{Algorithms}
\Crefname{algorithm}{Algorithm}{Algorithms}

\begin{document}

\title[Kernel-Aware Two-Grid Preconditioning]{A Kernel-Aware Two-Grid Preconditioner for
Cell-Centered Nearly Incompressible Elasticity}
\author{Shubin Fu}

\address{
Eastern Institute of Technology,
Ningbo, Zhejiang 315200, China
}

\email{sfu@eitech.edu.cn}


\author{Xiang Zhong}

\address{
Department of Mathematics,
The Chinese University of Hong Kong,
Shatin, Hong Kong SAR, China
}
\email{xzhong@math.cuhk.edu.hk}
\subjclass[2020]{65F08, 65F10, 65N22, 65N30, 74B05}
\keywords{Nearly incompressible elasticity, mixed finite element method, cell-centered method,  two-grid preconditioner, discrete kernel, subspace correction.}
\date{}
\dedicatory{}

\begin{abstract}
Nearly incompressible elasticity is often discretized by mixed methods to avoid
locking, but the resulting saddle-point systems can be expensive to solve.  We
consider a weakly symmetric multipoint stress discretization whose stress and rotation
unknowns can be eliminated independently over vertex interaction regions, leaving a
symmetric positive definite system for cell-centered displacement only.  Although
smaller, the reduced system contains a parameter-dependent Schur complement that
becomes difficult to precondition as the Lam\'e ratio increases.  We show that its
energy consists of a uniformly coercive shear part and a dominant semidefinite
volumetric part.  Standard cell-centered interpolation need not preserve the
volumetric kernel and can introduce energy amplified near incompressibility.  Motivated by
this structure, we develop a two-grid preconditioner combining two complementary
coarse subspaces: a conventional displacement subspace and a discrete-curl subspace
lying exactly in the fine-grid volumetric kernel.  Symmetric vertex-patch smoothing
completes the displacement-only solver.  The energy splitting and kernel-compatible
decompositions yield a condition-number bound independent of the Lam\'e ratio for
each fixed grid pair.  Two- and three-dimensional experiments confirm the
approximation properties of the locally eliminated discretization and show that the
kernel correction prevents the deterioration of the displacement coarse subspace
alone, also for discontinuous material coefficients.
\end{abstract}

\maketitle

\section{Introduction}

Nearly incompressible linear elasticity poses challenges for both discretization and
iterative solution.  As the Lam\'e ratio \(\lambda/\mu\) grows, volumetric deformation is
increasingly constrained.  An incompatible displacement space may lock, while the
dominant volumetric response may also cause iterative solvers to deteriorate.
Stable approximation and robust solution therefore require complementary treatments.
Mixed stress--displacement formulations
provide stable approximations and retain stress as a primary variable
\cite{arnoldfalkwinther2007,carstensenheuer2025,valseth2021}, but their indefinite systems contain
substantially more globally coupled unknowns than a displacement formulation.  An
efficient treatment should therefore retain mixed stability and stress information
without carrying all variables in the global solve.

This need has motivated the development of multipoint stress methods
\cite{ambartsumyan2020,ambartsumyan2021,yaziciyotov2026}.  Their common idea is to
combine a low-order stress--displacement discretization with vertex localization of
the stress bilinear form.  In mixed finite element constructions, vertex quadrature
makes the stress mass matrix block diagonal over interaction regions, allowing stress
to be eliminated through independent local solves.  The resulting cell-centered
reduction is closely related to the localization underlying multipoint finite volume
methods \cite{nordbotten2015,keilegavlennordbotten2017}.

Within this family, the multipoint stress control volume method of
\cite{fuzhao2025} realizes the same localization without special quadrature.  All
fields are approximated by piecewise constants, and the stress space is designed so
that its bilinear form decomposes into vertex-local contributions.  Stress can
therefore be eliminated locally.  When rotation is constant on each interaction
region, it can be eliminated in a second local step.  The final problem is a sparse
symmetric positive definite system with one displacement vector per cell, while
stress and rotation are recovered by local back-substitution.  This complete
reduction retains local conservation and locking-free approximation in a compact
displacement-only system.

The compact system inherits the conditioning associated with near incompressibility.
As the bulk-to-shear ratio increases, volume-changing displacement components are
penalized much more strongly than nearly volume-preserving components.  In the mixed
formulation, this separation is expressed through the coupling among stress,
displacement, and rotation; local elimination transfers it to the displacement
Schur complement.  Preconditioning the resulting system therefore requires the
discrete volumetric constraint to be identified within the reduced operator.
Related efforts to recover structural information after condensation have also
guided preconditioners for other locally eliminated discretizations
\cite{brunnerkolev2011,dipietro2023}.

Multilevel preconditioning offers a natural route to scalable solution of the
resulting symmetric positive definite system \cite{xuzikatanov2017}, but its effectiveness depends on
preserving this hidden constraint.  Conventional cell-centered interpolation
represents geometrically smooth displacements but need not preserve discrete volume
conservation.  A volume-preserving coarse displacement may therefore acquire
volumetric energy after prolongation, with the resulting defect amplified by the bulk
modulus.  A constraint-preserving coarse correction provides a global treatment of
these modes, complementing the fine-scale correction supplied by local relaxation.

This requirement appears in several established solver frameworks.  Abstract
subspace-correction theory identifies local decomposability of the limiting kernel as
a central condition for parameter-independent convergence
\cite{adler2024,leewuxuzikatanov2007,leewuxuzikatanov2008,wuzheng2014}.  Primal multigrid
methods adapt relaxation and transfer to the divergence constraint
\cite{schoberl1999,farrell2022}, while mixed auxiliary-space methods exploit the
explicit saddle-point structure \cite{chenhuhuang2018}.  Related Schwarz and
substructuring methods obtain parameter robustness through suitable local solvers and
coarse or primal constraints
\cite{beiraolovadinapavarino2006,dohrmannwidlund2009,pavarino2010,
	caipavarinowidlund2015,caipavarino2016,widlund2021}.  Across these frameworks,
parameter robustness rests on decompositions compatible with the limiting constraint.
Transferring this principle to a condensed cell-centered discretization requires a
volume-preserving subspace defined directly from the reduced operator.

The vertex-local balance relations retain the topological information needed for
this construction: they define a discrete measure of
volume change for cell-centered displacements.  The null space of this measure admits
a representation by scalar potentials in two dimensions and edge potentials in three
dimensions.  Related potential representations underlie auxiliary-space
preconditioners in \(H(\operatorname{curl})\) and \(H(\operatorname{div})\)
\cite{hiptmairxu2007,kolevvassilevski2012}.  Prolongating these potentials and then applying the fine-grid discrete
curl produces an exactly volume-preserving displacement transfer.  Based on this
construction, we propose a kernel-aware
two-grid preconditioner with a two-subspace coarse correction.  The first coarse
space is the conventional multilinear
cell-centered displacement space, which represents geometrically smooth error.  The
second is the potential-generated space, whose functions remain in the fine-grid
volume-preserving subspace.  The two corrections are complementary: the first
provides the usual coarse approximation, while the second transfers incompressible
modes without creating artificial volumetric energy.

To complement these global corrections, the interaction regions used for local
elimination provide displacement patches for fine-scale relaxation.  A colored
forward patch sweep, followed by the two Galerkin coarse corrections and a reverse
sweep, defines a symmetric preconditioner.  The prescribed damping ensures positive
definiteness and permits conjugate gradients.  Each coarse correction has its own
Galerkin factorization, while the potentials serve to construct displacement basis
functions.  The resulting iteration therefore retains the cell-centered structure
of the reduced system.

The analysis follows the same separation of volumetric and shear behavior.  Local
Schur-complement identities split the condensed energy into a coercive shear form
and a semidefinite volumetric form.  Exactness of the cell-centered complex then
identifies the volumetric kernel, allowing constrained and general displacements to
be decomposed over the coarse and patch spaces.  Under discrete shear stability,
these decompositions give an additive condition-number bound independent of the
Lam\'e ratio for each fixed grid pair.  An energy comparison carries this bound to
the symmetric composition used by the algorithm.

The numerical results support both the approximation properties and the solver
analysis.  The cell-centered displacement and locally recovered mixed fields retain
their expected convergence rates in the nearly incompressible regime.  In two and
three dimensions, the iteration counts remain controlled as the Lam\'e ratio
spans eight orders of magnitude on each grid, with moderate growth under
refinement.  Removing
the potential-generated correction produces pronounced parameter dependence,
demonstrating the role of the additional coarse space.  Parallel three-dimensional
experiments demonstrate the computational performance on systems with more than
fifty million displacement unknowns, and additional coarse-space comparisons
examine the effect of strongly discontinuous coefficients.

The remainder of the paper is organized as follows.  \Cref{sec:model} presents the
multipoint stress discretization and its reduction to a cell-centered displacement
system, including the three-dimensional local construction.  The two-grid
preconditioner is developed in \cref{sec:preconditioner}, and its convergence
analysis, together with the detailed three-dimensional stability and exact-complex
calculations, is given in \cref{sec:analysis}.  Numerical results, including the
recovered-field, discontinuous-coefficient, and nonhomogeneous-boundary tests, are
reported in \cref{sec:numerics}.

\section{Model problem and cell-centered discretization}
\label{sec:model}
We introduce the weakly symmetric formulation and its multipoint stress
discretization, then eliminate stress and rotation locally to obtain the
cell-centered displacement operator used in \cref{sec:preconditioner}.
\subsection{Weakly symmetric mixed formulation}

Let \(\Omega\subset\R^d\), \(d\in\{2,3\}\), be a bounded Lipschitz
domain.  Let \(\M=\R^{d\times d}\), and let \(\K\subset\M\) denote the
skew-symmetric matrices.  For isotropic elasticity, the compliance operator is
\begin{equation}
  \Acal\underline{\tau}=\frac{1}{2\mu}
  \left(\underline{\tau}-\frac{\lambda}{d\lambda+2\mu}
  \tr(\underline{\tau})\underline{I}\right).
  \label{eq:compliance}
\end{equation}
Here \(\mu>0\) and \(\lambda\geq0\) are the Lam\'e parameters.  Given a
body force \(\bm f\in\bm L^2(\Omega)\), the first-order
stress--displacement--rotation formulation is
\begin{equation}
\begin{aligned}
  \Acal\underline{\sigma}
    &=\nabla\bm u-\underline{\gamma}
      &&\text{in }\Omega,\\
  -\operatorname{div}\underline{\sigma}
    &=\bm f
      &&\text{in }\Omega,\\
  \skw(\underline{\sigma})
    &=0
      &&\text{in }\Omega,\\
  \bm u
    &=\bm 0
      &&\text{on }\partial\Omega.
\end{aligned}
\label{eq:strong-mixed}
\end{equation}
The unknowns are the Cauchy stress \(\underline{\sigma}\), the displacement
\(\bm u\), and the rotation \(\underline{\gamma}\); for a sufficiently
smooth solution, \(\underline{\gamma}=\skw(\nabla\bm u)\).

For
three-dimensional and plane-strain elasticity, Poisson's ratio \(\nu\)
satisfies
\[
  \frac{\lambda}{\mu}=\frac{2\nu}{1-2\nu}.
\]
Thus, \(\nu\to\tfrac12\) corresponds to the nearly incompressible
limit \(\lambda/\mu\to\infty\).

To impose stress symmetry weakly, we introduce the spaces
\[
  \underline{\Sigma}=H(\operatorname{div};\Omega;\M),\qquad
  \bm U=\bm L^2(\Omega),\qquad
  \underline{\Gamma}=L^2(\Omega;\K).
\]
The rotation acts as the Lagrange multiplier for the symmetry constraint.
The weakly symmetric mixed problem is to find
\((\underline{\sigma},\bm u,\underline{\gamma})
\in\underline{\Sigma}\times\bm U\times\underline{\Gamma}\) such that
\begin{align}
  \inner{\Acal\underline{\sigma}}{\underline{\tau}}
  +\inner{\bm u}{\operatorname{div}\underline{\tau}}
  +\inner{\underline{\gamma}}{\underline{\tau}}&=0
       &&\forall\underline{\tau}\in\underline{\Sigma}, \label{eq:mixed1}\\
  -\inner{\operatorname{div}\underline{\sigma}}{\bm v}&=\inner{\bm f}{\bm v}
       &&\forall\bm v\in\bm U, \label{eq:mixed2}\\
  \inner{\underline{\sigma}}{\underline{\eta}}&=0
       &&\forall\underline{\eta}\in\underline{\Gamma}. \label{eq:mixed3}
\end{align}
\subsection{Discrete spaces and mixed scheme}
\label{sec:discretization}

Let \(\T_h^M\) be a conforming partition of \(\Omega\) into rectangles for \(d=2\) or
cuboids for \(d=3\).  Its elements are called macro-elements.  Connecting the center of
each macro-element to its edge midpoints in two dimensions, or to the corresponding
face and edge centers in three dimensions, partitions every macro-element into
\(2^d\) subcells.  The subcell partition is denoted by \(\T_h\).  For every vertex
\(a\) of \(\T_h^M\), the subcells incident to \(a\) form an interaction region
\(\D_a\).

\begin{figure}[H]
  \centering
  \begin{tikzpicture}[scale=1.15,line cap=round,line join=round,
      every node/.style={font=\small}]
    \begin{scope}
      \fill[blue!8] (0,0)--(1.5,0)--(1.5,1.5)--(0,1.5)--cycle;
      \draw[black,thick] (0,0) rectangle (3,3);
      \draw[blue!70!black,densely dashed,line width=1.1pt]
        (1.5,1.5)--(1.5,0)
        (1.5,1.5)--(3,1.5)
        (1.5,1.5)--(1.5,3)
        (1.5,1.5)--(0,1.5);
      \draw[red!75!black,line width=1.4pt]
        (0,0)--(1.5,0) (0,0)--(0,1.5);
      \node[red!75!black,anchor=west] at (0.22,2.55)
        {$\F_h^{\mathrm{pr},1/2}$};
      \draw[red!75!black,->,line width=0.8pt]
        (0.55,2.38)--(0,1.02);
      \node[blue!70!black,anchor=west] at (1.72,2.18)
        {$\F_h^{\mathrm{dl}}$};
      \draw[blue!70!black,->,line width=0.8pt]
        (1.86,2.00)--(1.5,1.82);
      \fill (1.5,1.5) circle (1.8pt)
        node[above right] {$x_M$};
      \fill (0,0) circle (1.8pt) node[below left] {$a$};
      \foreach \x/\y in {1.5/0,3/1.5,1.5/3,0/1.5}
        \fill (\x,\y) circle (1.3pt);
      \node[above left] at (3,3) {$M$};
      \node at (0.72,0.72) {$E$};
      \node[anchor=north] at (1.5,-0.30)
        {\textup{(a)} Macro-element subdivision};
    \end{scope}

    \begin{scope}[shift={(5.0,-0.5)},scale=0.92]
      \fill[blue!8] (1,1) rectangle (3,3);
      \draw[black,thick] (0,0) rectangle (4,4);
      \draw[black,thick] (2,0)--(2,4) (0,2)--(4,2);
      \draw[blue!70!black,densely dashed,line width=1.1pt]
        (1,1)--(1,0) (1,1)--(2,1) (1,1)--(1,2) (1,1)--(0,1)
        (3,1)--(3,0) (3,1)--(4,1) (3,1)--(3,2) (3,1)--(2,1)
        (1,3)--(1,2) (1,3)--(2,3) (1,3)--(1,4) (1,3)--(0,3)
        (3,3)--(3,2) (3,3)--(4,3) (3,3)--(3,4) (3,3)--(2,3);
      \draw[blue!70!black,densely dashed,line width=1.5pt]
        (1,1) rectangle (3,3);
      \draw[red!75!black,line width=1.4pt]
        (2,2)--(2,1) (2,2)--(3,2)
        (2,2)--(2,3) (2,2)--(1,2);
      \foreach \x/\y in {1/1,3/1,1/3,3/3}
        \fill (\x,\y) circle (1.8pt);
      \fill (2,2) circle (2pt) node[above left] {$a$};
      \node at (2.55,2.55) {$\D_a$};
      \node[anchor=north] at (2,-0.35)
        {\textup{(b)} Vertex interaction region};
    \end{scope}

  \end{tikzpicture}
  \caption{Two-dimensional subcell geometry.  The solid red and dashed blue
  segments identify the two classes of subcell interfaces.  The shaded region
  in \textup{(b)} is the union of the four subcells incident to \(a\), namely
  \(\D_a\).}
  \label{fig:subcell-geometry}
\end{figure}

As illustrated in \cref{fig:subcell-geometry},
\(\F_h^{\mathrm{pr},1/2}\) consists of the half-edges inherited from
macro-element boundaries, while \(\F_h^{\mathrm{dl}}\) consists of the interfaces
introduced inside the macro-elements by the subcell partition.  In three dimensions
each macro-element face is divided into four quarter-faces; the same sets denote
these inherited subfaces and the interior subcell faces, respectively.  The discrete
displacement and rotation spaces are
\begin{align}
  \bm U_h&=\{\bm v_h\in\bm L^2(\Omega):
       \bm v_h|_M\in\R^d\quad\forall M\in\T_h^M\}, \label{eq:Uh}\\
  \underline{\Gamma}_h&=\{\underline{\eta}_h\in L^2(\Omega;\K):
       \underline{\eta}_h|_{\D_a}\text{ is constant for every interaction region }\D_a\}.
       \label{eq:Gammah}
\end{align}
Thus each macro-element carries one displacement vector and each interaction region
carries \(d(d-1)/2\) rotation values.

The stress space \(\underline{\Sigma}_h\) consists of matrix fields that are constant on every
subcell, whose row-wise normal components are continuous across
\(\F_h^{\mathrm{pr},1/2}\), and for which no continuity is imposed across
\(\F_h^{\mathrm{dl}}\).  The two-dimensional construction is described in
\cite{fuzhao2025}; the cuboidal basis, local ordering, and interaction-region matrices
used in three dimensions are specified in \cref{sec:three-dimensional-local-construction}.
This construction localizes the compliance form: every stress basis function is
supported on one interaction region, so basis functions associated with distinct
vertices are orthogonal in that form.

For \(\underline{\tau}_h\in\underline{\Sigma}_h\),
\(\bm v_h\in\bm U_h\), and
\(\underline{\eta}_h\in\underline{\Gamma}_h\), define
\begin{align}
  a_h^\sigma(\underline{\tau}_h,\underline{\omega}_h)
    &=\inner{\Acal\underline{\tau}_h}{\underline{\omega}_h}, \label{eq:astress}\\
  b_h(\underline{\tau}_h,\bm v_h)
    &=-\sum_{e\in\F_h^{\mathrm{dl}}}
      \langle \bm v_h,\jump{\underline{\tau}_h\bm n_e}\rangle_e
      =-\sum_{e\in\F_h^{\mathrm{pr},1/2}}
      \langle \underline{\tau}_h\bm n_e,\jump{\bm v_h}\rangle_e, \label{eq:bh}\\
  c_h(\underline{\tau}_h,\underline{\eta}_h)
    &=\inner{\underline{\tau}_h}{\underline{\eta}_h}. \label{eq:ch}
\end{align}
For an interior face, let \(\bm n_e\) point from the minus to the plus side and set
\(\jump{\bm v}_e=\bm v^+-\bm v^-\) and
\(\jump{\underline\tau\bm n_e}_e=(\underline\tau^--\underline\tau^+)\bm n_e\).
On boundary faces the normal is outward and the exterior displacement is zero.
Subcell summation by parts then gives
\(b_h(\underline\tau_h,\bm v_h)
=\sum_{M\in\T_h^M}\langle\underline\tau_h\bm n_M,\bm v_h|_M\rangle_{\partial M}\),
the positive macro-element divergence pairing.  For assembly, write
\(\underline\eta_h=\Xi(\bm\eta_h)\), where the skew-coordinate map is normalized by
\(\underline\tau:\Xi(\bm\eta)=\as(\underline\tau)\cdot\bm\eta\);
thus \eqref{eq:ch} is also \((\as(\underline\tau_h),\bm\eta_h)\).
For later use, equip the cell-centered displacement space with the jump norm
\begin{equation}
  \norm{\bm v_h}_{1,h}^2
  =\sum_{e\in\F_h^{\mathrm{pr},1/2}}
       h_e^{-1}\norm{\jump{\bm v_h}}_{L^2(e)}^2,
  \label{eq:discreteH1norm}
\end{equation}
Here \(h_e\) is the normal distance between the centers of the two macro-elements
adjacent to \(e\).  On a boundary subface it is twice the distance from the
macro-element center to that face, so \(h_e=h\) on a uniform Cartesian grid.
The boundary jump equals the negative interior trace.  Thus
\(\norm{\cdot}_{1,h}\) is a norm under the homogeneous displacement condition.

The discrete mixed problem reads: find
\((\underline{\sigma}_h,\bm u_h,\underline{\gamma}_h)
\in\underline{\Sigma}_h\times\bm U_h\times\underline{\Gamma}_h\) such that
\begin{align}
  a_h^\sigma(\underline{\sigma}_h,\underline{\tau}_h)
      +b_h(\underline{\tau}_h,\bm u_h)
      +c_h(\underline{\tau}_h,\underline{\gamma}_h)&=0
      &&\forall\underline{\tau}_h\in\underline{\Sigma}_h,
      \label{eq:disc1}\\
  b_h(\underline{\sigma}_h,\bm v_h)&=-\inner{\bm f}{\bm v_h}
      &&\forall\bm v_h\in\bm U_h,
      \label{eq:disc2}\\
  c_h(\underline{\sigma}_h,\underline{\eta}_h)&=0
      &&\forall\underline{\eta}_h\in\underline{\Gamma}_h.
      \label{eq:disc3}
\end{align}

\subsection{Local elimination of stress and rotation}

Let \(M_\sigma\), \(B\), and \(C\) be the matrices associated with
\(a_h^\sigma\), \(b_h\), and \(c_h\), respectively.
With \(\boldsymbol f\) denoting the load vector of the physical body force,
\eqref{eq:disc1}--\eqref{eq:disc3} have the symmetric block form
\begin{equation}
  \begin{bmatrix}
    M_\sigma & B^{\trans} & C^{\trans}\\
    B        & 0          & 0\\
    C        & 0          & 0
  \end{bmatrix}
  \begin{bmatrix}\boldsymbol\sigma\\ \boldsymbol u\\ \boldsymbol\gamma\end{bmatrix}
  =
  \begin{bmatrix}0\\ -\boldsymbol f\\ 0\end{bmatrix}.
  \label{eq:fullmatrix}
\end{equation}

The stress matrix is a direct sum over interaction regions,
\begin{equation}
  M_\sigma=\bigoplus_a M_{\sigma,a},
  \label{eq:stressblocks}
\end{equation}
where every \(M_{\sigma,a}\) is symmetric positive definite and has a mesh-independent
size.  Eliminating stress therefore requires only the local inverses
\(M_{\sigma,a}^{-1}\).  Define
\begin{equation*}
\begin{aligned}
  S_{uu}&=BM_\sigma^{-1}B^{\trans},
  &S_{u\gamma}&=BM_\sigma^{-1}C^{\trans},\\
  S_{\gamma u}&=CM_\sigma^{-1}B^{\trans},
  &S_{\gamma\gamma}&=CM_\sigma^{-1}C^{\trans},
\end{aligned}
\end{equation*}
Eliminating stress and multiplying the two resulting equations by \(-1\) gives
\begin{equation}
  \begin{bmatrix}
    S_{uu}&S_{u\gamma}\\
    S_{\gamma u}&S_{\gamma\gamma}
  \end{bmatrix}
  \begin{bmatrix}\boldsymbol u\\\boldsymbol\gamma\end{bmatrix}
  =
  \begin{bmatrix}\boldsymbol f\\0\end{bmatrix}.
  \label{eq:urotation}
\end{equation}
Because the rotation basis is constant on interaction regions,
\(S_{\gamma\gamma}\) is itself a direct sum of
\(d(d-1)/2\)-dimensional vertex blocks.  Rotation is consequently eliminated by a
second collection of local solves.  The final system is
\begin{equation}
\begin{aligned}
  A_h\boldsymbol u&=\boldsymbol b_h,
  &\boldsymbol b_h&=\boldsymbol f,\\
  A_h&=S_{uu}-S_{u\gamma}S_{\gamma\gamma}^{-1}S_{\gamma u}.
\end{aligned}
  \label{eq:reduced}
\end{equation}
Under the local rank condition and the global discrete inf--sup condition stated in
\cref{ass:discrete-stability}, the vertex rotation blocks are invertible and \(A_h\)
is symmetric positive definite.  The global solve thus involves
\(d\,|\T_h^M|\) displacement entries.  Once the displacement is known, rotation
and stress are recovered in sequence through
\(\boldsymbol\gamma=-S_{\gamma\gamma}^{-1}S_{\gamma u}\boldsymbol u\) and
\(\boldsymbol\sigma=-M_\sigma^{-1}(B^{\trans}\boldsymbol u+C^{\trans}\boldsymbol\gamma)\).
Both formulas use the same vertex-local factors as the elimination.  The mixed
problem has therefore been reduced to a positive definite displacement system,
whose parameter-dependent energy determines the preconditioning problem.

For completeness, we next record the three-dimensional local basis, matrix
ordering, and boundary reference patches used by the reduction and by the
stability analysis in \cref{sec:analysis}.

\subsection{Three-dimensional local construction}
\label{sec:three-dimensional-local-construction}
\label{three-dimensional-local-matrices}
The three-dimensional reduction is assembled from the displacement contributions
of individual vertex interaction regions.  We derive each contribution from its
subcell stress basis, first at an interior vertex and then on the boundary
reference patches.  Let \(a\) be an interior vertex of a
uniform cubical macro-mesh of size \(h\).  Its eight incident subcells are indexed by
\begin{equation}
 q(\alpha,\beta,\gamma)=\alpha+2\beta+4\gamma,
 \qquad (\alpha,\beta,\gamma)\in\{0,1\}^3.
 \label{eq:subcell-index}
\end{equation}
The center of the incident macro-element is
\(a+\tfrac h2(2\alpha-1,2\beta-1,2\gamma-1)\).
Each subcell has volume \(h^3/8\), and each primary quarter-face through \(a\) has
area \(h^2/4\).  The local displacement vector is ordered componentwise as
\begin{equation}
 \boldsymbol u_a=
 (u_{0,1},\ldots,u_{7,1},
  u_{0,2},\ldots,u_{7,2},
  u_{0,3},\ldots,u_{7,3})^{\trans}\in\R^{24}.
 \label{eq:u-order}
\end{equation}

For each coordinate direction there are four primary quarter-faces through \(a\),
and every such face carries the three components of the normal traction.  With
\(i=0,1,2\) denoting the traction component, the stress coordinates are
\begin{align}
 r_x(\beta,\gamma,i)&=i+3(\beta+2\gamma),
 &&\beta,\gamma\in\{0,1\}, \label{eq:x-row}\\
 r_y(\alpha,\gamma,i)&=12+i+3(\alpha+2\gamma),
 &&\alpha,\gamma\in\{0,1\}, \label{eq:y-row}\\
 r_z(\alpha,\beta,i)&=24+i+3(\alpha+2\beta),
 &&\alpha,\beta\in\{0,1\}. \label{eq:z-row}
\end{align}
The row ordering identifies a stress basis function with one traction component on
one primary quarter-face, oriented in the positive coordinate direction.
This function is constant on the two adjacent subcells,
has the prescribed continuous normal traction across their common quarter-face, and
has zero normal traction on the remaining primary quarter-faces.  An interior
interaction region consequently has \(4\times3\times3=36\) stress coordinates.

Let \(\{\underline{\Phi}_{a,\ell}\}_{\ell=0}^{n_{\sigma,a}-1}\) be the active local stress basis,
and let \(\bm e_{q,i}\), \(i=0,1,2\), be the constant displacement test vector in
physical component \(i+1\) of incident macro-element \(q\).  These basis functions determine the stress compliance,
displacement coupling, and rotation coupling that enter the local mixed problem.  With
\begin{equation*}
 \as(\underline{\tau})=
 (\underline{\tau}_{32}-\underline{\tau}_{23},\
  \underline{\tau}_{13}-\underline{\tau}_{31},\
  \underline{\tau}_{21}-\underline{\tau}_{12})^{\trans},
\end{equation*}
we use \(\Xi(\bm\eta)\bm v=\bm\eta\times\bm v\), so that
\(\underline\tau:\Xi(\bm\eta)=\as(\underline\tau)\cdot\bm\eta\).
In two dimensions the analogous convention is
\(\Xi(\eta)=\left[\begin{smallmatrix}0&-\eta\\ \eta&0\end{smallmatrix}\right]\)
and \(\as(\underline\tau)=\tau_{21}-\tau_{12}\).
Exact subcell integration defines
\begin{align}
 (M_{\sigma,a})_{\ell m}
 &=\sum_{q\subset\D_a}\frac{h^3}{8}
   \bigl(\Acal_q\underline{\Phi}_{a,m}|_q\bigr):
   \underline{\Phi}_{a,\ell}|_q,
 \label{eq:Msigma}\\
 (B_a)_{(q,i),\ell}
 &=b_a(\underline{\Phi}_{a,\ell},\bm e_{q,i}),
 \label{eq:B}\\
 (C_a)_{r,\ell}
 &=\sum_{q\subset\D_a}\frac{h^3}{8}
   \as(\underline{\Phi}_{a,\ell}|_q)_r.
 \label{eq:C}
\end{align}
For example, an \(x\)-normal quarter-face joining
\(q_-=q(0,\beta,\gamma)\) and \(q_+=q(1,\beta,\gamma)\) contributes
\begin{equation}
 (B_a^{\trans}\boldsymbol u_a)_{r_x(\beta,\gamma,i)}
 =\frac{h^2}{4}\bigl(u_{q_-,i+1}-u_{q_+,i+1}\bigr).
 \label{eq:Bx-entry}
\end{equation}
The opposite outward normals of the two macro-elements give the difference in
\eqref{eq:Bx-entry}, consistently with the macro-element divergence pairing
in \eqref{eq:bh}.  The other coordinate directions follow by cyclic permutation.

With these local couplings, stress elimination produces the displacement--rotation
matrix
\begin{equation}
 \mathcal S_a=
 \begin{bmatrix}B_a\\C_a\end{bmatrix}
 M_{\sigma,a}^{-1}
 \begin{bmatrix}B_a^{\trans}&C_a^{\trans}\end{bmatrix}.
 \label{eq:first-schur}
\end{equation}
The remaining three rotation coordinates are then eliminated to obtain the local
displacement contribution
\begin{equation}
 K_a=B_aM_{\sigma,a}^{-1}B_a^{\trans}
 -B_aM_{\sigma,a}^{-1}C_a^{\trans}
  (C_aM_{\sigma,a}^{-1}C_a^{\trans})^{-1}
  C_aM_{\sigma,a}^{-1}B_a^{\trans}.
 \label{eq:local-K}
\end{equation}
The global displacement matrix is assembled by adding \(K_a\) over the mesh vertices.
Both inverses in \eqref{eq:local-K} act on matrices of uniformly bounded
dimension, so every vertex contribution can be formed independently.

At a boundary vertex, the same indexing is retained and rows supported only on absent
subcells are removed.  This restriction produces the four reference-patch types in
\cref{tab:reference-ranks}.
\begin{table}[htbp]
  \centering
  \caption{Three-dimensional reference interaction regions.}
  \label{tab:reference-ranks}
  \begin{tabular}{lrrrr}
    \toprule
    vertex type & interior & face & edge & corner \\
    \midrule
    incident cells        & 8  & 4  & 2  & 1 \\
    \(n_{\sigma,a}\)      & 36 & 24 & 15 & 9 \\
    \(\rank C_a\)        & 3  & 3  & 3  & 3 \\
    \bottomrule
  \end{tabular}
\end{table}
Since \(M_{\sigma,a}\) is positive definite, the full row rank of \(C_a\) implies that
\(C_aM_{\sigma,a}^{-1}C_a^{\trans}\) is positive definite for every patch type.  Thus
the second local elimination in \eqref{eq:local-K} is well defined at interior and
boundary vertices alike.

\section{A kernel-aware two-grid preconditioner}
\label{sec:preconditioner}

We construct a two-grid preconditioner in the subspace-correction framework
\cite{xu1992}.  Vertex-patch relaxation treats fine-scale coupling, while the coarse
correction combines geometric approximation with preservation of the volumetric
constraint.  We first identify that constraint, then construct the two coarse
subspaces and combine their corrections with symmetric patch sweeps.

\subsection{Discrete volumetric operator and kernel}
\label{sec:kernel}

The discrete volume change at a vertex \(a\) is determined by the normal displacement
jumps across the primary subfaces in \(\D_a\).  On a uniform Cartesian grid of
width \(h\), each such face has measure \((h/2)^{d-1}\).  Collecting their oriented
contributions and normalizing by \(|\D_a|^{1/2}\) defines
\(D_h:\bm U_h\to Q_h^v\), where \(Q_h^v\) contains one scalar value per vertex:
\begin{equation}
  (D_h\bm v_h)_a=\theta_a
  \sum_{M\ni a}\sum_{j=1}^d s_{aM}^{(j)}(\bm v_M)_j,
  \qquad \theta_a=\frac{(h/2)^{d-1}}{\sqrt{|\D_a|}}.
  \label{eq:discretediv}
\end{equation}
Here \(s_{aM}^{(j)}=1\) when the center of \(M\) lies on the negative
\(j\)-side of \(a\), and \(s_{aM}^{(j)}=-1\) on the positive side, consistently
with the divergence pairing \eqref{eq:bh}.  The resulting incompressibility
constraint defines the subspace
\begin{equation}
  \Z_h=\ker(D_h).
  \label{eq:Zh}
\end{equation}
A kernel-preserving transfer can be constructed through discrete potentials.  In two
dimensions let \(\Psi_h^0\) be the scalar potential space on
interior fine-grid vertices and define the cell-centered discrete curl
\[
  K_h=\curl_h:\Psi_h^0\longrightarrow \bm U_h.
\]
In three dimensions let \(\bm E_h^0\) be the lowest-order edge-potential space with the
boundary conditions induced by \eqref{eq:strong-mixed}.  After fixing the gradient gauge,
the cell-centered discrete curl is
\[
  K_h=\curl_h:\bm E_h^0/\nabla_h\Psi_h^0\longrightarrow \bm U_h.
\]
The signed incidence matrices form a discrete complex, and hence
\begin{equation}
  D_hK_h=0.
  \label{eq:divcurl}
\end{equation}
Thus \(\range(K_h)\subseteq\Z_h\).  Any transfer obtained by prolongating potentials
and then applying \(K_h\) consequently has range in the fine-grid incompressible
kernel.

\subsection{Kernel-aware coarse correction}

Let \(\T_H^M\) be a coarse macro-grid obtained by aggregating \(m=H/h\) fine cells in
each coordinate direction, and suppose that the fine and coarse grids are nested.
The global correction combines two coarse subspaces with distinct roles: one
approximates geometrically smooth displacement, while the other transfers
incompressible modes without leaving \(\Z_h\).

For the geometric component, let \(\bm U_H\) contain one vector value at each coarse
cell center.  In one coordinate
direction, interpolate linearly between neighboring coarse cell centers and extend the
first and last coarse values constantly across the two boundary half-strips.  Taking
the tensor product of this one-dimensional rule, component by component, and sampling
at the fine cell centers defines
\begin{equation}
  P_U:\bm U_H\longrightarrow \bm U_h,
  \qquad \V_H^U=\range(P_U).
  \label{eq:Pu}
\end{equation}
The one-dimensional interpolation has full column rank, and therefore its tensor
product \(P_U\) is injective.  Since \(A_h\) is positive definite, the Galerkin matrix
\(A_U=P_U^{\trans}A_hP_U\) is positive definite as well.
Let \(D_H\) be the coarse-grid volumetric operator obtained from
\eqref{eq:discretediv} on \(\T_H^M\).
For a coarse grid with \(n_H\) cells in each direction,
\(\dim(\V_H^U)=d n_H^d\).  This space provides the usual geometric approximation of
smooth displacement fields, but its transfer is not kernel preserving: in general,
\begin{equation}
  D_H\bm z_H=0 \quad\not\Longrightarrow\quad D_hP_U\bm z_H=0.
  \label{eq:Pu-not-kernel-preserving}
\end{equation}

The second component is constructed at the potential level so that its prolongation
remains in \(\Z_h\).  In two dimensions, let
\(Q_\psi:\Psi_H^0\to\Psi_h^0\) be bilinear interpolation of zero-boundary nodal
potentials.  We set
\begin{equation}
  P_{\Z}=K_hQ_\psi,
  \qquad \V_H^{\Z}=\range(P_{\Z}).
  \label{eq:Pk2d}
\end{equation}
If the coarse grid has \(n_H\) cells in each direction, then
\(\dim(\V_H^{\Z})=(n_H-1)^2\).

In three dimensions the degrees of freedom in \(\bm E_H^0\) and \(\bm E_h^0\) are oriented
edge integrals, or equivalently edge cochain values.  Let
\(Q_e:\bm E_H^0\to \bm E_h^0\) be the nested lowest-order edge transfer: along an edge
direction the coarse integral is distributed over the fine subedges, while in the two
transverse directions the coefficients are interpolated bilinearly.  Consequently,
for every oriented coarse edge \(E\),
\begin{equation}
  (\bm e_H)_E=\sum_{e\subset E}(Q_e\bm e_H)_e,
  \qquad Q_e\nabla_H\psi_H=\nabla_hQ_\psi\psi_H,
  \label{eq:edge-transfer-commuting}
\end{equation}
where \(Q_\psi\) in three dimensions is the nested trilinear transfer of
zero-boundary nodal cochains.  The first identity fixes the normalization of the edge
degrees of freedom, and the second is the commuting relation for the discrete gradient; compare the
commuting-transfer construction in \cite{hiptmair1998}.  Gradients, edge transfers,
and the gauge use the common indexing detailed in \cref{sec:exactness}.
Let \(G_H\) inject a gauge-fixed set of coarse edge potentials into
\(\bm E_H^0\).  The kernel transfer is
\begin{equation}
  P_{\Z}=K_hQ_eG_H,
  \qquad \V_H^{\Z}=\range(P_{\Z}).
  \label{eq:Pk3d}
\end{equation}
We use the standard tree--cotree gauge on the relative grid graph: after the boundary
is collapsed to one root, a zero-boundary gradient removes all tree-edge values and
the cotree edges provide independent coordinates.  This removes exactly the gradient
nullspace without changing the curl image.  Hence
\begin{equation}
  \dim(\V_H^{\Z})=2n_H^3-3n_H^2+1.
  \label{eq:kerneldim3d}
\end{equation}
The resulting transfer \(P_{\Z}\) has full column rank, as established in
\cref{cor:kerneltransfer}.
Combining \eqref{eq:divcurl} with either \eqref{eq:Pk2d} or \eqref{eq:Pk3d} gives
\begin{equation}
  D_hP_{\Z}=0.
  \label{eq:kernelpreserve}
\end{equation}
Thus \(\V_H^{\Z}\subset\Z_h\).  Since \(A_h\) is positive definite and \(P_{\Z}\)
has full column rank, its Galerkin matrix \(A_{\Z}=P_{\Z}^{\trans}A_hP_{\Z}\) is
positive definite as well.

The two spaces enter the global correction through separate Galerkin solves:
\begin{equation}
\begin{aligned}
  A_U&=P_U^{\trans}A_hP_U,
  &A_{\Z}&=P_{\Z}^{\trans}A_hP_{\Z},\\
  B_H&=P_UA_U^{-1}P_U^{\trans}
      +P_{\Z}A_{\Z}^{-1}P_{\Z}^{\trans}.
\end{aligned}
  \label{eq:splitcoarse}
\end{equation}
Thus each subspace contributes an independently factored Galerkin correction, and
their sum supplies the global component of the preconditioner.

\subsection{Vertex-patch relaxation}
\label{sec:Vertex-patch}
Fine-scale relaxation complements the two coarse corrections by resolving the
coupling within vertex interaction regions.  Since these regions also determine
the assembly of \(A_h\), we construct the local correction spaces from the cells
surrounding each fine-grid vertex.

Let \(\{\omega_i\}_{i=1}^{N_p}\) be overlapping patches associated with fine-grid
vertices.  The basic patch contains the \(2^d\) cells sharing a vertex, and the analysis
allows any fixed enlargement of this patch.  This construction depends only on patch
topology and can be generated by the same topological principles used in
\cite{farrellknepleymitchellwechsung2021}.  In two dimensions the computations use a
symmetric one-layer enlargement containing at most \(4^2\) cells.  In three dimensions
the basic \(2^3\) patch is enlarged by one cell in the positive coordinate direction,
giving at most \(3^3\) cells.  Let \(R_i:\bm U_h\to \bm U_i\) be restriction to the displacement
unknowns in \(\omega_i\), and define
\begin{equation}
  A_i=R_iA_hR_i^{\trans}.
  \label{eq:patchmatrix}
\end{equation}
Each \(A_i\) is factored once.  Given a residual \(\boldsymbol r\) and a current
correction \(\boldsymbol y\), a forward multiplicative sweep performs
\begin{equation}
  \boldsymbol y\leftarrow\boldsymbol y
  +\omega_sR_i^{\trans}A_i^{-1}R_i(\boldsymbol r-A_h\boldsymbol y),
  \qquad i=1,\ldots,N_p,
  \label{eq:forwardsweep}
\end{equation}
and the backward sweep uses the reverse order.  Here \(\omega_s=1\) for a serial
one-patch-at-a-time sweep. In parallel, the patch indices \(\{1,\ldots,N_p\}\) are partitioned into
\(N_c\) disjoint color classes \(\{\mathcal I_j\}_{j=1}^{N_c}\);
patches within each class are applied simultaneously with a fixed damping
\(\omega_s\), and the colors are traversed forward before the coarse correction
and backward afterward.

Let \(S\) denote the linear residual-to-correction operator generated by one forward
sweep from the zero vector.  Its symmetric multiplicative completion is
\begin{equation}
  \overline S=S+S^{\trans}-S^{\trans}A_hS.
  \label{eq:symmetricsmoother}
\end{equation}

\subsection{Symmetric two-grid composition}

The split coarse correction \(B_H\) acts on global error components, whereas the
forward and backward patch sweeps reduce the remaining fine-scale error.  Combining
them symmetrically gives the two-grid preconditioner
\begin{equation}
  B_{\mathrm{TL}}
  =\overline S+(I-S^{\trans}A_h)B_H(I-A_hS).
  \label{eq:twoleveloperator}
\end{equation}

\begin{algorithm}[H]
\caption{Application of the kernel-aware two-grid preconditioner}
\label{alg:twolevel}
\begin{algorithmic}[1]
\Require Residual \(\boldsymbol r\); fine matrix \(A_h\); patch factors
\(\{A_i^{-1}\}\); coloring \(\{\mathcal I_j\}_{j=1}^{N_c}\); transfers
\(P_U,P_{\Z}\); coarse factors \(A_U^{-1},A_{\Z}^{-1}\).
\State \(\boldsymbol y\gets\boldsymbol 0\)
\For{colors \(j=1,\ldots,N_c\)}
  \State \(\boldsymbol\rho\gets\boldsymbol r-A_h\boldsymbol y\)
  \State \(\boldsymbol y\gets\boldsymbol y+\omega_s\displaystyle\sum_{i\in\mathcal I_j}
  R_i^{\trans}A_i^{-1}R_i\boldsymbol\rho\)
\EndFor
\State \(\boldsymbol r_c\gets\boldsymbol r-A_h\boldsymbol y\)
\State \(\boldsymbol y\gets\boldsymbol y
                   +P_UA_U^{-1}P_U^{\trans}\boldsymbol r_c
                   +P_{\Z}A_{\Z}^{-1}P_{\Z}^{\trans}\boldsymbol r_c\)
\For{colors \(j=N_c,\ldots,1\)}
  \State \(\boldsymbol\rho\gets\boldsymbol r-A_h\boldsymbol y\)
  \State \(\boldsymbol y\gets\boldsymbol y+\omega_s\displaystyle\sum_{i\in\mathcal I_j}
  R_i^{\trans}A_i^{-1}R_i\boldsymbol\rho\)
\EndFor
\State \Return \(\boldsymbol y\)
\end{algorithmic}
\end{algorithm}

\section{Analysis of the preconditioner}
\label{sec:analysis}

Let \(\Omega=(0,1)^d\), with constant Lam\'e coefficients, a uniform \(n^d\)
Cartesian fine grid, and a coarse grid obtained by fixed integer coarsening in each
coordinate direction.  The analysis connects the local elimination to the nearly
singular subspace-correction framework \cite{leewuxuzikatanov2007,xu1992} through
the kernel-compatibility principle \cite{schoberl1999,farrell2022}.  We first separate
the reduced shear and volumetric energies, then use the cell-centered complex to
decompose their error components over the coarse and patch spaces.  Comparing the
additive correction with the symmetric composition completes the condition-number
estimate.  The supporting three-dimensional stability calculation and tensor-product
exactness argument are included in the corresponding subsections below.

\subsection{Parameter structure of the local Schur complements}

Let \(\underline{\Sigma}_{h,a}\) and \(\underline{\Gamma}_{h,a}\) be the stress and rotation spaces on an
interaction region \(\D_a\), and define the weakly symmetric local stress space
\begin{equation}
  \widehat{\underline{\Sigma}}_a
  =\{\underline{\tau}_a\in\underline{\Sigma}_{h,a}:
       c_h(\underline{\tau}_a,\underline{\eta}_a)=0
       \quad\forall\underline{\eta}_a\in\underline{\Gamma}_{h,a}\}.
  \label{eq:localsymmetricstress}
\end{equation}
Write \(b_a\) for the contribution of \(\D_a\) to \(b_h\), and set
\(m_{\lambda,a}(\underline{\tau}_a,\underline{\omega}_a)=
 (\Acal\underline{\tau}_a,\underline{\omega}_a)_{\D_a}\).

Local rotation elimination and control of the reduced shear energy require the
following properties of the mixed spaces.

\begin{assumption}[Local solvability and discrete shear stability]
\label[assumption]{ass:discrete-stability}
For \(d\in\{2,3\}\), each interaction-region rotation matrix \(C_a\) has full row
rank.  In addition, the Cartesian multipoint-stress spaces satisfy
\begin{equation}
 \sup_{0\neq\underline{\tau}_h\in\underline{\Sigma}_h}
 \frac{b_h(\underline{\tau}_h,\bm v_h)+c_h(\underline{\tau}_h,\underline{\eta}_h)}
      {\norm{\underline{\tau}_h}_{L^2(\Omega)}}
 \geq \beta_d
 \left(\norm{\bm v_h}_{1,h}^2+\norm{\underline{\eta}_h}_{L^2(\Omega)}^2\right)^{1/2},
 \label{eq:discrete-infsup}
\end{equation}
where \(\beta_d>0\) is independent of \(h\).
\end{assumption}

The rank condition makes the vertex rotation blocks invertible, while
\eqref{eq:discrete-infsup} supplies the shear-energy lower bound.  The latter is
established for the two-dimensional quadrilateral spaces in
\cite[Lemma~4.2 and Remark~4.1]{fuzhao2025}.  In three dimensions, the reference-patch
ranks are recorded in \cref{sec:three-dimensional-local-construction}; the
generalized-eigenvalue calculation below provides quantitative evidence for the
reduced shear bound, and \cref{lem:mixed-stability-criterion} relates that bound to
\eqref{eq:discrete-infsup}.
\begin{lemma}[Constrained local Schur complement]
\label{lem:localschur}
For \(\bm v_h\in\bm U_h\), let \(\boldsymbol v\) denote its
coefficient vector in the cell-centered displacement basis, and let \(A_h\) be the reduced displacement matrix defined
in \eqref{eq:reduced}.
Then
\begin{equation}
  a_{\lambda,h}(\bm v_h,\bm v_h)
  :=\boldsymbol v^{\trans}A_h\boldsymbol v
  =\sum_a
  \sup_{0\neq\underline{\tau}_a\in
  \widehat{\underline{\Sigma}}_a}
  \frac{b_a(\underline{\tau}_a,\bm v_h)^2}
       {m_{\lambda,a}(\underline{\tau}_a,\underline{\tau}_a)}.
  \label{eq:localschurdual}
\end{equation}
\end{lemma}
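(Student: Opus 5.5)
Our approach is to use the block-diagonal structure: by \eqref{eq:stressblocks} and the fact that the rotation Schur complement $S_{\gamma\gamma}$ is also a direct sum of vertex blocks, the reduced matrix assembles as $A_h=\sum_a E_a^{\trans}K_aE_a$. Here $E_a$ extracts the displacement coefficients of the cells meeting $\D_a$, and $K_a$ is the local matrix \eqref{eq:local-K}. To check this, note that $B=\sum_a$ (local $B_a$ placed in the stress columns of $a$) and $C$ is block diagonal by vertex. Then $S_{uu}=\sum_a E_a^{\trans}B_aM_{\sigma,a}^{-1}B_a^{\trans}E_a$, and $S_{u\gamma}S_{\gamma\gamma}^{-1}S_{\gamma u}$ likewise splits into vertex terms, because the $\gamma$-blocks for different $a$ never couple. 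It therefore suffices to prove, for one region with local vector $\boldsymbol v_a=E_a\boldsymbol v$,
\[
\boldsymbol v_a^{\trans}K_a\boldsymbol v_a=\sup_{0\neq\underline\tau_a\in\widehat{\underline\Sigma}_a}\frac{b_a(\underline\tau_a,\bm v_h)^2}{m_{\lambda,a}(\underline\tau_a,\underline\tau_a)}.
\]

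For the local identity we use a constrained-minimization characterization. Write $M=M_{\sigma,a}$, $g=B_a^{\trans}\boldsymbol v_a$ (so $b_a(\underline\tau_a,\bm v_h)=\boldsymbol t^{\trans}g$ for coefficient vector $\boldsymbol t$), and $C=C_a$. The space $\widehat{\underline\Sigma}_a$ corresponds to $\ker C$. By Cauchy--Schwarz in the $M$-inner product restricted to $\ker C$, the supremum equals $g^{\trans}\Pi g$, where $\Pi$ is the $M^{-1}$-weighted compression
\[
\Pi=M^{-1}-M^{-1}C^{\trans}(CM^{-1}C^{\trans})^{-1}CM^{-1}.
\]
The maximizer is $\boldsymbol t^*=\Pi g$. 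We verify this directly. First, $C\Pi=0$, so $\boldsymbol t^*\in\ker C$. Second, for any $\boldsymbol t\in\ker C$ we have $\boldsymbol t^{\trans}M\Pi g=\boldsymbol t^{\trans}g$, so $g$ and $M\boldsymbol t^*$ induce the same functional on $\ker C$. Hence
\[
\sup_{\boldsymbol t\in\ker C}\frac{(\boldsymbol t^{\trans}g)^2}{\boldsymbol t^{\trans}M\boldsymbol t}=\sup_{\boldsymbol t\in\ker C}\frac{(\boldsymbol t^{\trans}M\boldsymbol t^*)^2}{\boldsymbol t^{\trans}M\boldsymbol t}=(\boldsymbol t^*)^{\trans}M\boldsymbol t^*=g^{\trans}\Pi M\Pi g=g^{\trans}\Pi g,
\]
where the last step uses that $\Pi M\Pi=\Pi$, which is a direct computation. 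Substituting $g=B_a^{\trans}\boldsymbol v_a$ gives $g^{\trans}\Pi g=\boldsymbol v_a^{\trans}K_a\boldsymbol v_a$ by comparison with \eqref{eq:local-K}. Invertibility of $CM^{-1}C^{\trans}$ comes from the full row rank of $C_a$ in \cref{ass:discrete-stability}. If $g$ vanishes on $\ker C$, both sides are zero, which matches the convention that the supremum of a zero numerator is zero.

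The main obstacle is bookkeeping rather than analysis. We must confirm that $b_h=\sum_a b_a$ with each $b_a$ depending only on the stress degrees of freedom of $\D_a$; this holds because every stress basis function is supported in one interaction region. We must also check that the rotation coupling $c_h$ restricted to $\D_a$ is exactly the matrix $C_a$ from \eqref{eq:C}, so that $\widehat{\underline\Sigma}_a$ is its kernel. Finally, the global elimination order (stress, then rotation) must coincide with the local order. This follows because the global $S_{\gamma\gamma}$ is block diagonal, so its inverse, and hence the global rotation elimination, decomposes vertex by vertex.
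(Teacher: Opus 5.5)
Your proposal is correct and follows essentially the same route as the paper. Both arguments identify each vertex contribution to $\boldsymbol v^{\trans}A_h\boldsymbol v$ with the squared $m_{\lambda,a}$-dual norm of $b_a(\cdot,\bm v_h)$ on $\widehat{\underline{\Sigma}}_a=\ker C_a$ via its Riesz representer; your $\boldsymbol t^*=\Pi g$ is exactly $-\boldsymbol\tau_a^\star$ from the paper's minimization over $\boldsymbol\eta_a$. The differences are only presentational: you verify $C\Pi=0$ and $\Pi M\Pi=\Pi$ directly instead of invoking stationarity, and you spell out the vertex-wise assembly $A_h=\sum_a E_a^{\trans}K_aE_a$, which the paper takes for granted.
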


\begin{proof}
Let
\(\boldsymbol\ell_a:=B_a^{\trans}\bm v_a
\), where \(\bm v_a\) is the local coefficient vector of \(\bm v_h\) on \(D_a\).
By locally eliminating the stress variable, the contribution associated with \(D_a\) to \(\bm v^{\trans}A_h\bm v\) can be written as
\[
    q_a(\bm v_h)
    :=
    \min_{\boldsymbol\eta_a}
    (\boldsymbol\ell_a+C_a^{\trans}\boldsymbol\eta_a)^{\trans}
    M_{\sigma,a}^{-1}
    (\boldsymbol\ell_a+C_a^{\trans}\boldsymbol\eta_a).
\]
Since \(C_a\) has full row rank,
\(C_aM_{\sigma,a}^{-1}C_a^{\trans}\) is positive definite.  Define
\(\boldsymbol\tau_a^\star
    :=
    -M_{\sigma,a}^{-1}
    (\boldsymbol\ell_a+C_a^{\trans}\boldsymbol\eta_a^\star).
\)
The stationarity condition gives
\(C_a\boldsymbol\tau_a^\star=0,
\)
hence the corresponding stress
\(\underline{\tau}_a^\star\) belongs to
\(\widehat{\underline{\Sigma}}_a\).  Moreover, for every
\(\underline{\omega}_a\in\widehat{\underline{\Sigma}}_a\),
\[
    m_{\lambda,a}
    (\underline{\tau}_a^\star,\underline{\omega}_a)
    =
    -b_a(\underline{\omega}_a,\bm v_h).
\]
Thus \(\underline{\tau}_a^\star\) is the Riesz representative of the
functional
\(\underline{\omega}_a\mapsto-b_a(\underline{\omega}_a,\bm v_h)\).
Therefore, by the Riesz representation theorem,
\[
    q_a(\bm v_h)=m_{\lambda,a}
    (\underline{\tau}_a^\star,\underline{\tau}_a^\star)
    =
    \sup_{0\neq\underline{\omega}_a
    \in\widehat{\underline{\Sigma}}_a}
    \frac{
        b_a(\underline{\omega}_a,\bm v_h)^2
    }{
        m_{\lambda,a}
        (\underline{\omega}_a,\underline{\omega}_a)
    }.
\]
Summing the local contributions over \(a\) yields
\eqref{eq:localschurdual}.
\end{proof}

For later use, denote the local summand in \eqref{eq:localschurdual} by
\begin{equation*}
 a_{\lambda,a}(\bm v_h,\bm v_h)
 =\sup_{0\neq\underline{\tau}_a\in\widehat{\underline{\Sigma}}_a}
       \frac{b_a(\underline{\tau}_a,\bm v_h)^2}
            {m_{\lambda,a}(\underline{\tau}_a,\underline{\tau}_a)},
\end{equation*}
and define its symmetric bilinear version by polarization.  Thus
\(a_{\lambda,h}=\sum_a a_{\lambda,a}\).

Let
\(\operatorname{dev}\underline{\tau}
=\underline{\tau}-d^{-1}\tr(\underline{\tau})\underline I\) and
\(\operatorname{sph}\underline{\tau}
=d^{-1}\tr(\underline{\tau})\underline I\).  The compliance energy has the
orthogonal decomposition
\begin{equation}
  m_{\lambda,a}(\underline{\tau}_a,\underline{\tau}_a)
  =\frac{1}{2\mu}\norm{\operatorname{dev}\underline{\tau}_a}_{\D_a}^2
   +\frac{1}{d\lambda+2\mu}
      \norm{\operatorname{sph}\underline{\tau}_a}_{\D_a}^2.
  \label{eq:compliancesplit}
\end{equation}
Let
\begin{equation}
 \underline J_a=\frac{\underline I}{\sqrt{d|\D_a|}}\quad\hbox{on }\D_a,
 \qquad \norm{\underline J_a}_{L^2(\D_a)}=1.
 \label{eq:normalized-spherical-mode}
\end{equation}
For each coordinate direction, summing the quarter-face basis functions whose
prescribed traction equals the face normal yields the constant field
\(\underline I\) on \(\D_a\) (see \cite{fuzhao2025} and \cref{sec:three-dimensional-local-construction}); hence \(\underline J_a\in\underline{\Sigma}_{h,a}\),
and \(\as(\underline J_a)=0\), so this constant symmetric stress belongs to
\(\widehat{\underline{\Sigma}}_a\).  Put
\begin{equation}
  \widehat{\underline{\Sigma}}_a^0
  =\{\underline{\tau}_a\in\widehat{\underline{\Sigma}}_a:
       (\underline{\tau}_a,\underline J_a)_{\D_a}=0\},
  \qquad d_a(\bm v_h)=b_a(\underline J_a,\bm v_h).
  \label{eq:localstresssplit}
\end{equation}

\begin{lemma}[Uniform separation of the spherical mode]
\label{lem:sphericalsplit}
There is a constant \(C_{\rm loc}\), independent of \(h\), \(\lambda\), and
\(\mu\), such that
\begin{equation}
  a_{\lambda,a}(\bm v_h,\bm v_h)
  =(d\lambda+2\mu)d_a(\bm v_h)^2+r_{\lambda,a}(\bm v_h,\bm v_h),
  \label{eq:localsphericalenergy}
\end{equation}
where
\begin{equation}
 r_{\lambda,a}(\bm v_h,\bm v_h)
 =\sup_{0\neq\underline{\tau}_a^0\in\widehat{\underline{\Sigma}}_a^0}
   \frac{b_a(\underline{\tau}_a^0,\bm v_h)^2}
        {m_{\lambda,a}(\underline{\tau}_a^0,\underline{\tau}_a^0)},
 \label{eq:localremainder}
\end{equation}
and
\begin{equation}
  r_{0,a}(\bm v_h,\bm v_h)
  \leq r_{\lambda,a}(\bm v_h,\bm v_h)
  \leq C_{\rm loc}r_{0,a}(\bm v_h,\bm v_h).
  \label{eq:localremainderbound}
\end{equation}
\end{lemma}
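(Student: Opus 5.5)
The plan is to split the supremum in \eqref{eq:localschurdual} orthogonally in the $m_{\lambda,a}$ inner product, using that $\underline J_a$ is an $m_{\lambda,a}$-orthogonal direction, and then to compare the remainder forms with $\lambda$ and with $\lambda=0$ by a direct bound on the compliance.

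\textbf{Step 1: $m_{\lambda,a}$-orthogonality of the splitting.} Every $\underline\tau_a\in\widehat{\underline\Sigma}_a$ decomposes uniquely as $\underline\tau_a=c\,\underline J_a+\underline\tau_a^0$ with $c=(\underline\tau_a,\underline J_a)_{\D_a}$ and $\underline\tau_a^0\in\widehat{\underline\Sigma}_a^0$, since $\underline J_a$ has unit $L^2$ norm and lies in $\widehat{\underline\Sigma}_a$. Next I check that this splitting is also $m_{\lambda,a}$-orthogonal. We have
\[
\Acal\underline J_a=\frac{1}{d\lambda+2\mu}\underline J_a,
\]
because $\underline J_a$ is spherical. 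Hence, for any $\underline\omega$,
\[
m_{\lambda,a}(\underline J_a,\underline\omega)=\frac{1}{d\lambda+2\mu}(\underline J_a,\underline\omega)_{\D_a},
\]
which vanishes on $\widehat{\underline\Sigma}_a^0$. It also gives $m_{\lambda,a}(\underline J_a,\underline J_a)=(d\lambda+2\mu)^{-1}$.

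\textbf{Step 2: the identity \eqref{eq:localsphericalenergy}.} For a finite-dimensional inner product space written as an orthogonal direct sum $\mathrm{span}\{\underline J_a\}\oplus\widehat{\underline\Sigma}_a^0$, the squared dual norm of the functional $b_a(\cdot,\bm v_h)$ equals the sum of its squared dual norms on the two summands. This is the Riesz argument of \cref{lem:localschur} applied componentwise: the representer splits into orthogonal pieces. The piece on the span of $\underline J_a$ is
\[
\frac{b_a(\underline J_a,\bm v_h)^2}{m_{\lambda,a}(\underline J_a,\underline J_a)}=(d\lambda+2\mu)\,d_a(\bm v_h)^2,
\]
and the piece on $\widehat{\underline\Sigma}_a^0$ is $r_{\lambda,a}$ by definition. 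This proves \eqref{eq:localsphericalenergy}.

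\textbf{Step 3: the bounds \eqref{eq:localremainderbound}.} The left inequality follows from monotonicity of the compliance in $\lambda$. By \eqref{eq:compliancesplit}, the spherical coefficient $1/(d\lambda+2\mu)$ is at most $1/(2\mu)$, so $m_{\lambda,a}\le m_{0,a}$ pointwise on stresses. Taking reciprocals inside the supremum over the same space $\widehat{\underline\Sigma}_a^0$ gives $r_{0,a}\le r_{\lambda,a}$.

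The right inequality is the main obstacle. It requires $m_{0,a}(\underline\tau,\underline\tau)\le C_{\rm loc}\,m_{\lambda,a}(\underline\tau,\underline\tau)$ uniformly on $\widehat{\underline\Sigma}_a^0$. Since $m_{\lambda,a}\ge(2\mu)^{-1}\norm{\operatorname{dev}\underline\tau}^2$ and $m_{0,a}\le C\mu^{-1}\norm{\underline\tau}^2$, it suffices to establish the local deviatoric control
\[
\norm{\underline\tau}_{\D_a}\le C\norm{\operatorname{dev}\underline\tau}_{\D_a}\qquad\forall\,\underline\tau\in\widehat{\underline\Sigma}_a^0 .
\]
I would prove this by finite-dimensionality and scaling.
\begin{itemize}
\item The map $\underline\tau\mapsto\operatorname{dev}\underline\tau$ must be injective on $\widehat{\underline\Sigma}_a^0$. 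If $\operatorname{dev}\underline\tau=0$, then $\underline\tau=p\,\underline I$ with $p$ piecewise constant on subcells.
\item I would then need to argue from the quarter-face normal-continuity structure that $p$ is constant on $\D_a$. The normal traction $p\,\bm n$ is continuous across each primary subface, and every pair of incident subcells is linked through such faces within the patch.
\item Then $\underline\tau$ is a multiple of $\underline J_a$, which is excluded by the orthogonality constraint, so $\underline\tau=0$.
\end{itemize}
The boundary patch types must be checked separately, since there some subcells are absent. For these I would rely on the reference configurations of \cref{tab:reference-ranks}, where the patch remains connected through primary faces. Finally, on a uniform Cartesian mesh only finitely many reference patches occur, and both norms scale identically with $h$. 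The constant is therefore independent of $h$, and of $\lambda$ and $\mu$ after the factor $\mu$ cancels, which yields $C_{\rm loc}$.
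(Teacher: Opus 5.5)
Your proposal is correct and takes essentially the same route as the paper. Both arguments use three ingredients:
\begin{itemize}
\item the \(m_{\lambda,a}\)-orthogonal splitting \(\widehat{\underline{\Sigma}}_a=\operatorname{span}\{\underline J_a\}\oplus\widehat{\underline{\Sigma}}_a^0\), with \(m_{\lambda,a}(\underline J_a,\underline J_a)=(d\lambda+2\mu)^{-1}\);
\item the resulting additive splitting of the squared dual norm;
\item uniform deviatoric coercivity on \(\widehat{\underline{\Sigma}}_a^0\), obtained from injectivity of the deviator (normal continuity plus connectivity of the patch) together with finite-dimensionality on the reference patches and scaling.
\end{itemize}
Your separate monotonicity argument for \(r_{0,a}\le r_{\lambda,a}\) simply spells out what the paper compresses into the statement \(m_{\lambda,a}\simeq m_{0,a}\).
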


\begin{proof}
If \(\operatorname{dev}\underline{\tau}_a=0\), then
\(\underline{\tau}_a|_E=\alpha_E\underline I\)
on each subcell \(E\subset D_a\).
Normal continuity and the connectivity of the
interaction-region adjacency graph imply that
\(\alpha_E=\alpha_a\) throughout \(D_a\).
Consequently,
\[
    \underline{\tau}_a
    =\alpha_a\underline I
    =\alpha_a\sqrt{d|D_a|}\,\underline J_a
    \qquad\text{on }D_a.
\]
Hence
\(\ker(\operatorname{dev}|_{\widehat{\underline{\Sigma}}_a})
    =\operatorname{span}\{\underline J_a\}.
\)
The decomposition
\(\widehat{\underline{\Sigma}}_a
=\operatorname{span}\{\underline J_a\}
\oplus\widehat{\underline{\Sigma}}_a^0\)
is \(m_{\lambda,a}\)-orthogonal, with
\[
 m_{\lambda,a}(\underline J_a,\underline J_a)
 =\frac{1}{d\lambda+2\mu},
 \qquad
 m_{\lambda,a}(\underline J_a,\underline{\tau}_a^0)=0
 \quad
 \forall\underline{\tau}_a^0\in\widehat{\underline{\Sigma}}_a^0.
\]
Let $\underline{\tau}_a=c\underline J_a+\underline{\tau}_a^0$ with $c\in\mathbb{R}$ and $\underline{\tau}_a^0\in\widehat{\underline{\Sigma}}_a^0$. Combining the above orthogonality and Riesz representation theorem, the squared dual norm in
\eqref{eq:localschurdual} can be split as
\[
\begin{aligned}
a_{\lambda,a}(\bm v_h,\bm v_h)
&=
\sup_{(c,\underline{\tau}_a^0)\neq(0,0)}
\frac{
\left[c\,d_a(\bm v_h)
+b_a(\underline{\tau}_a^0,\bm v_h)\right]^2
}{
\dfrac{c^2}{d\lambda+2\mu}
+m_{\lambda,a}(\underline{\tau}_a^0,\underline{\tau}_a^0)
}=
\frac{b_a(\underline J_a,\bm v_h)^2}
     {m_{\lambda,a}(\underline J_a,\underline J_a)}
+r_{\lambda,a}(\bm v_h,\bm v_h)\\
&=
(d\lambda+2\mu)d_a(\bm v_h)^2
+r_{\lambda,a}(\bm v_h,\bm v_h),
\end{aligned}
\]
which proves the exact splitting
\eqref{eq:localsphericalenergy}.
On
\(\widehat{\underline{\Sigma}}_a^0\), the deviator is injective; finite-dimensional norm equivalence
on the finitely many Cartesian reference patches and affine scaling give
\begin{equation}
  \norm{\underline{\tau}_a}_{\D_a}^2
  \leq C_{\rm dev}\norm{\operatorname{dev}\underline{\tau}_a}_{\D_a}^2
  \qquad\forall\underline{\tau}_a\in\widehat{\underline{\Sigma}}_a^0,
  \label{eq:localdevcoercivity}
\end{equation}
uniformly in the patch type and \(h\).  Formula \eqref{eq:compliancesplit} then
shows \(m_{\lambda,a}\simeq m_{0,a}\) on this complement, and taking inverse dual
norms proves \eqref{eq:localremainderbound}.
\end{proof}

Evaluating \(b_a\) on the normalized spherical stress \(\underline J_a\) relates
the local energy splitting to the volume-change operator introduced in
\cref{sec:kernel}.  The half-face representation \eqref{eq:bh} gives
\begin{equation}
 d_a(\bm v_h)
 =-\frac{1}{\sqrt{d|\D_a|}}
   \sum_{\substack{
       e\in\F_h^{\mathrm{pr},1/2},
       e\subset\D_a
   }}
       |e|\,\bm n_e\cdot\jump{\bm v_h}_e.
 \label{eq:da-half-face}
\end{equation}
Since \(|e|=(h/2)^{d-1}\), collecting the coefficients of each incident cell
recovers the signs and weights in \eqref{eq:discretediv}, yielding
\begin{equation}
  (D_h\bm v_h)_a=\sqrt d\,d_a(\bm v_h).
  \label{eq:Dhfromsphere}
\end{equation}
Thus the volume-change operator used to construct the kernel correction is also
the one generated by the spherical part of the local Schur complement.  To control
the remaining shear contribution, we first bound the stress--displacement coupling
in the jump norm.

\begin{lemma}[Discrete coupling continuity]
\label{lem:bh-continuity}
There is a mesh-independent constant \(C_b\) such that
\begin{equation}
  |b_h(\underline{\tau}_h,\bm v_h)|
  \leq C_b\norm{\underline{\tau}_h}_{L^2(\Omega)}\norm{\bm v_h}_{1,h}
  \qquad\forall (\underline{\tau}_h,\bm v_h)\in\underline{\Sigma}_h\times \bm U_h.
  \label{eq:bh-continuity}
\end{equation}
\end{lemma}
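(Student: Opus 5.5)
Since \(b_h\) can be written in the half-face form of \eqref{eq:bh}, the plan is to bound it face by face with Cauchy--Schwarz and then absorb the face traces of the piecewise-constant stress into subcell \(L^2\) norms by a scaling argument. The steps are as follows.

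First, use the second expression in \eqref{eq:bh}:
\[
b_h(\underline{\tau}_h,\bm v_h)=-\sum_{e\in\F_h^{\mathrm{pr},1/2}}\langle\underline{\tau}_h\bm n_e,\jump{\bm v_h}\rangle_e .
\]
Normal continuity across primary subfaces makes \(\underline\tau_h\bm n_e\) single-valued on each such face. For a boundary subface, the jump is the negative interior trace, as stated after \eqref{eq:discreteH1norm}. Inserting the weights \(h_e^{\pm1/2}\), Cauchy--Schwarz gives
\[
|b_h(\underline{\tau}_h,\bm v_h)|\le\Bigl(\sum_e h_e\norm{\underline{\tau}_h\bm n_e}_{L^2(e)}^2\Bigr)^{1/2}\norm{\bm v_h}_{1,h}.
\]

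Second, bound the trace sum. Each \(e\in\F_h^{\mathrm{pr},1/2}\) is a face of at least one subcell \(E\), and \(\underline\tau_h|_E\) is constant. Hence
\[
\norm{\underline\tau_h\bm n_e}_{L^2(e)}^2\le|e|\,|\underline\tau_h|_E|^2=\frac{|e|}{|E|}\norm{\underline\tau_h}_{L^2(E)}^2 .
\]
On the uniform Cartesian grid we have \(|e|=(h/2)^{d-1}\), \(|E|=(h/2)^d\), and \(h_e=h\). Therefore \(h_e|e|/|E|=2\), so each face term is at most \(2\norm{\underline\tau_h}_{L^2(E)}^2\).

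Third, do the counting. Assign each face to one adjacent subcell. Each subcell has exactly \(d\) faces lying on the macro-element boundary, so it is charged at most \(d\) times. Summing the face bounds then yields the claim with \(C_b=\sqrt{2d}\).

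I expect the only delicate point to be the definition of \(h_e\) on boundary subfaces and for non-uniform meshes. For quasi-uniform or shape-regular meshes, \(h_e\simeq\) the subcell diameter, and the same argument gives a constant depending only on shape regularity. Another mild subtlety is that the first step must rely on single-valuedness of the normal traction on primary faces, which is built into \(\underline\Sigma_h\), so that no average is needed.
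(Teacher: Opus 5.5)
Your proof is correct and follows the paper's argument. Both apply Cauchy--Schwarz to the half-face form of \(b_h\) with weights \(h_e^{\pm1/2}\), bound the single-valued primary-face tractions by subcell \(L^2\) norms, and count face multiplicity; the only difference is that you use the piecewise-constant stress directly to obtain the explicit constant \(C_b=\sqrt{2d}\) on the uniform grid, whereas the paper appeals to a reference-subcell bound and affine scaling.
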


\begin{proof}
The summation-by-parts form in \eqref{eq:bh} and Cauchy--Schwarz give
\begin{equation*}
 |b_h(\underline{\tau}_h,\bm v_h)|
 \leq
 \left(\sum_{e\in\F_h^{\mathrm{pr},1/2}}
        h_e\norm{\underline{\tau}_h \bm n_e}_{L^2(e)}^2\right)^{1/2}
 \norm{\bm v_h}_{1,h}.
\end{equation*}
On each reference subcell, the normal-trace degrees of freedom are bounded by the
subcell \(L^2\) norm.  Affine scaling to a Cartesian subcell therefore yields
\begin{equation*}
 \sum_{e\in\F_h^{\mathrm{pr},1/2}}
 h_e\norm{\underline{\tau}_h \bm n_e}_{L^2(e)}^2
 \leq C\norm{\underline{\tau}_h}_{L^2(\Omega)}^2.
\end{equation*}
Every subcell face has uniformly bounded multiplicity in this sum, which proves
\eqref{eq:bh-continuity}.
\end{proof}

In three dimensions, the mixed shear estimate can be related to the assembled
displacement energy through the block structure of the locally eliminated system.
Let \(M_{0,h}\) and \(A_{0,h}\) denote the stress and reduced displacement
matrices at \(\lambda=0\).  The following criterion identifies the lower energy
bound needed for \eqref{eq:discrete-infsup}.

\begin{lemma}[Block criterion for mixed shear stability]
\label{lem:mixed-stability-criterion}
Let \(M_{\Gamma,h}\) be the \(L^2\) mass matrix on \(\underline{\Gamma}_h\), and
let \(J_h\) be the matrix associated
with the jump norm, i.e.,
\(\bm v_h^{\trans}J_h\bm v_h
    =\|\bm v_h\|_{1,h}^2\) for all \(\bm v_h\in\bm U_h\).  Suppose that
the reference-patch matrices \(C_a\) have full row rank and that
\begin{equation}
 \bm v_h^{\trans}A_{0,h}\bm v_h
 \geq c_0\mu \bm v_h^{\trans}J_h\bm v_h
 \qquad\forall \bm v_h\in \bm U_h,
 \label{eq:reduced-shear-lower-criterion}
\end{equation}
with \(c_0\) independent of \(h\).  Then the mixed estimate
\eqref{eq:discrete-infsup} holds with a mesh-independent constant.
\end{lemma}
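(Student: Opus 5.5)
Set \(N_h=\begin{bmatrix}B\\C\end{bmatrix}\) for the full coupling matrix, so that the left side of \eqref{eq:discrete-infsup} is the dual norm of the functional \(\underline\tau_h\mapsto b_h(\underline\tau_h,\bm v_h)+c_h(\underline\tau_h,\underline\eta_h)\) with respect to the \(L^2\) stress norm.  In matrix form its square is
\[
 \begin{bmatrix}\bm v\\\bm\eta\end{bmatrix}^{\trans}N_hM_{L^2}^{-1}N_h^{\trans}\begin{bmatrix}\bm v\\\bm\eta\end{bmatrix},
\]
where \(M_{L^2}\) is the stress \(L^2\) mass matrix.  At \(\lambda=0\) we have \(\Acal=(2\mu)^{-1}I\), hence \(M_{0,h}=(2\mu)^{-1}M_{L^2}\).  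The supremum in \eqref{eq:discrete-infsup} therefore equals \((2\mu)^{-1}\) times the quadratic form of the \(\lambda=0\) Schur matrix
\[
 \mathcal S_0=\begin{bmatrix}S_{uu}&S_{u\gamma}\\S_{\gamma u}&S_{\gamma\gamma}\end{bmatrix}_{\lambda=0}
\]
from \eqref{eq:urotation}.  It thus suffices to prove the block lower bound
\[
 \begin{bmatrix}\bm v\\\bm\eta\end{bmatrix}^{\trans}\mathcal S_0\begin{bmatrix}\bm v\\\bm\eta\end{bmatrix}\ \ge\ c\,\mu\,\bigl(\bm v^{\trans}J_h\bm v+\bm\eta^{\trans}M_{\Gamma,h}\bm\eta\bigr)
\]
with \(c\) independent of \(h\).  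Taking square roots and using that \(\mu\) cancels against \((2\mu)^{-1}\) then gives \eqref{eq:discrete-infsup}.

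The block bound will come from the Schur-complement identity
\[
 x^{\trans}\mathcal S_0x=\bm v^{\trans}A_{0,h}\bm v+(\bm\eta-\bm\eta^\star)^{\trans}S_{\gamma\gamma}(\bm\eta-\bm\eta^\star),
 \qquad \bm\eta^\star=-S_{\gamma\gamma}^{-1}S_{\gamma u}\bm v,
\]
which is exactly the structure behind \eqref{eq:reduced}.  Three ingredients are needed.

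First, \(S_{\gamma\gamma}\ge c_1\mu M_{\Gamma,h}\).  The matrix is block diagonal over vertices, so I would check this patchwise.  On each reference patch \(C_aM_{0,a}^{-1}C_a^{\trans}\) is positive definite by the full-rank hypothesis, and finite-dimensional compactness over the four patch types, together with scaling (\(M_{0,a}\sim h^d/\mu\), \(C_a\sim h^d\), \(M_{\Gamma,a}\sim h^d\)), gives the uniform constant.

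Second, \(\bm\eta^\star\) is bounded by \(\bm v\).  From \(S_{\gamma u}=CM_{0}^{-1}B^{\trans}\) and Cauchy--Schwarz in the \(M_0^{-1}\) inner product,
\[
 \bm\eta^{\star\trans}S_{\gamma\gamma}\bm\eta^\star\le \bm v^{\trans}S_{uu}\bm v .
\]
Also \(\bm v^{\trans}S_{uu}\bm v\) is the dual norm of \(b_h(\cdot,\bm v_h)\), which \cref{lem:bh-continuity} bounds by \(2\mu C_b^2\norm{\bm v_h}_{1,h}^2\).

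Third, the hypothesis \eqref{eq:reduced-shear-lower-criterion} gives \(\bm v^{\trans}A_{0,h}\bm v\ge c_0\mu\,\bm v^{\trans}J_h\bm v\).

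These combine through a weighted-average step.  Write \(X=\bm v^{\trans}A_{0,h}\bm v\) and \(Y=(\bm\eta-\bm\eta^\star)^{\trans}S_{\gamma\gamma}(\bm\eta-\bm\eta^\star)\), so \(x^{\trans}\mathcal S_0x=X+Y\).  By the triangle inequality and the second ingredient,
\[
 \bm\eta^{\trans}S_{\gamma\gamma}\bm\eta\le 2Y+2\,\bm v^{\trans}S_{uu}\bm v\le 2Y+C\mu\norm{\bm v_h}_{1,h}^2\le 2Y+C'X,
\]
where the last step uses the third ingredient.  The first ingredient then gives \(c_1\mu\,\bm\eta^{\trans}M_{\Gamma,h}\bm\eta\lesssim X+Y\).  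Adding the jump-norm bound from the third ingredient yields the block lower bound.

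I expect the main care to lie in the first ingredient: confirming that every active local stress space, including the boundary patches with absent subcells, scales uniformly.  This is what turns the rank condition of \cref{tab:reference-ranks} into an \(h\)-independent constant.  The constant will depend on \(c_0\), \(C_b\), and the reference-patch constant.
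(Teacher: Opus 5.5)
Your proposal is correct and follows the paper's proof essentially step by step. It uses the same block factorization of the \(\lambda=0\) displacement--rotation Gram matrix (your \(\bm\eta-\bm\eta^\star\) is the paper's \(\bm\eta+T_h\bm v\)), the patchwise equivalence \(S_{\gamma\gamma}\simeq\mu M_{\Gamma,h}\) from the rank condition and scaling, and the Schur-ordering bound \(\norm{T_h\bm v}_{S_{\gamma\gamma}}^2\leq\bm v^{\trans}S_{uu}\bm v\lesssim\mu\norm{\bm v_h}_{1,h}^2\) via \cref{lem:bh-continuity}. It also uses the same rescaling by \(M_{0,h}=(2\mu)^{-1}M_{L^2,h}\); note that it is the square of the supremum that equals \((2\mu)^{-1}\) times the quadratic form, which your square-root step already assumes. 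Your explicit weighted-average step spells out what the paper summarizes as uniform boundedness of the triangular map and its inverse, and you correctly observe that only the lower half of the rotation-block equivalence is needed.
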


\begin{proof}
To relate the reduced bound to mixed stability, suppose
\(A_{0,h}\succeq c_0\mu J_h\) with \(c_0>0\) independent of \(h\).  The
displacement--rotation Gram matrix at \(\lambda=0\) is
\begin{equation*}
 \mathcal K_{0,h}
 =\begin{bmatrix}B_h\\ C_h\end{bmatrix}M_{0,h}^{-1}
  \begin{bmatrix}B_h^{\trans}&C_h^{\trans}\end{bmatrix}
 =\begin{bmatrix}S_{uu}&S_{u\gamma}\\
                   S_{\gamma u}&S_{\gamma\gamma}\end{bmatrix}.
\end{equation*}
Reference-patch rank and affine scaling imply
\begin{equation}
 c_\gamma\mu M_{\Gamma,h}
 \preceq S_{\gamma\gamma}
 \preceq C_\gamma\mu M_{\Gamma,h},
 \label{eq:rotation-equivalence}
\end{equation}
where the constants are independent of \(h\).  Since the displacement Schur
complement is \(A_{0,h}\), setting
\(T_h=S_{\gamma\gamma}^{-1}S_{\gamma u}\) gives
\begin{equation}
 \mathcal K_{0,h}
 =\begin{bmatrix}I&T_h^{\trans}\\0&I\end{bmatrix}
  \begin{bmatrix}A_{0,h}&0\\0&S_{\gamma\gamma}\end{bmatrix}
  \begin{bmatrix}I&0\\T_h&I\end{bmatrix}.
 \label{eq:mixed-factorization}
\end{equation}
The Schur-complement ordering and continuity of the stress--displacement coupling
yield
\begin{equation*}
 \norm{T_h\bm v_h}_{S_{\gamma\gamma}}^2
 \leq \bm v_h^{\trans}S_{uu}\bm v_h
 \leq C\mu\norm{\bm v_h}_{1,h}^2.
\end{equation*}
Thus the triangular map
\((\bm v_h,\underline{\eta}_h)
\mapsto(\bm v_h,\underline{\eta}_h+T_h\bm v_h)\) and its inverse are uniformly bounded
in the displacement--rotation product norm.  Combining
\eqref{eq:rotation-equivalence}, \eqref{eq:mixed-factorization}, and
\(A_{0,h}\succeq c_0\mu J_h\) gives
\begin{equation*}
 \begin{bmatrix}\bm v_h\\\underline{\eta}_h\end{bmatrix}^{\trans}
 \mathcal K_{0,h}
 \begin{bmatrix}\bm v_h\\\underline{\eta}_h\end{bmatrix}
 \geq c\mu\bigl(\norm{\bm v_h}_{1,h}^2+
                    \norm{\underline{\eta}_h}_{L^2(\Omega)}^2\bigr).
\end{equation*}
Since \(M_{0,h}=(2\mu)^{-1}M_{L^2,h}\), the quadratic form on the left is
\(2\mu\) times the square of the mixed supremum with the stress \(L^2\) norm.
Dividing by \(2\mu\) therefore gives \eqref{eq:discrete-infsup}.
\end{proof}

The reduced bound in \eqref{eq:reduced-shear-lower-criterion} is
characterized by the smallest generalized eigenvalue
\begin{equation}
 \vartheta_h=\min_{0\neq\bm v_h}
 \frac{\bm v_h^{\trans}A_{0,h}\bm v_h}
      {\mu\bm v_h^{\trans}J_h\bm v_h}.
 \label{eq:shear-eigenvalue}
\end{equation}
The values in \cref{tab:shear-eigenvalues} give quantitative evidence of
shear-energy control in the jump norm: they decrease from 1.423 to 1.022
under refinement.  The shift-and-invert eigensolves have a maximum relative
eigenpair residual of \(5.3\times10^{-11}\).

\begin{table}[H]
  \centering
  \caption{Smallest generalized eigenvalue in
  \eqref{eq:shear-eigenvalue} on uniform cubical grids.}
  \label{tab:shear-eigenvalues}
  \begin{tabular}{lrrrrrr}
    \toprule
    grid & \(4^3\) & \(6^3\) & \(8^3\) & \(12^3\) & \(16^3\) & \(20^3\) \\
    \midrule
    \(\vartheta_h\) & 1.423 & 1.219 & 1.132 & 1.061 & 1.034 & 1.022 \\
    \bottomrule
  \end{tabular}
\end{table}

With the shear stability specified in \cref{ass:discrete-stability},
the local spherical splitting now yields a global characterization of the
reduced energy.

\begin{theorem}[Uniform reduced-operator equivalence]
\label{thm:reduced-equivalence}
Under \cref{ass:discrete-stability}, the reduced form satisfies
\begin{equation}
  a_{0,h}(\bm v_h,\bm v_h)+\lambda\norm{D_h\bm v_h}_2^2
  \leq a_{\lambda,h}(\bm v_h,\bm v_h)
  \leq C_*\bigl(a_{0,h}(\bm v_h,\bm v_h)
                 +\lambda\norm{D_h\bm v_h}_2^2\bigr),
  \label{eq:reduced-equivalence}
\end{equation}
where \(C_*\) is independent of \(h\) and \(\lambda/\mu\).  Moreover,
\begin{equation}
  a_{0,h}(\bm v_h,\bm v_h)\simeq\mu\norm{\bm v_h}_{1,h}^2.
  \label{eq:shear-jump-equivalence}
\end{equation}
\end{theorem}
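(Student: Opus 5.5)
We sum the local splitting of \cref{lem:sphericalsplit} over all vertices, and then convert the spherical term into the volumetric penalty using \eqref{eq:Dhfromsphere}.

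\emph{Step 1: rewrite the spherical term.} By \eqref{eq:localsphericalenergy} and \cref{lem:localschur},
\[
 a_{\lambda,h}(\bm v_h,\bm v_h)=\sum_a(d\lambda+2\mu)\,d_a(\bm v_h)^2+\sum_a r_{\lambda,a}(\bm v_h,\bm v_h).
\]
Relation \eqref{eq:Dhfromsphere} gives \(d_a(\bm v_h)^2=d^{-1}(D_h\bm v_h)_a^2\). The first sum is therefore \((\lambda+2\mu/d)\norm{D_h\bm v_h}_2^2\).

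\emph{Step 2: evaluate at \(\lambda=0\).} The same identity at \(\lambda=0\) reads
\[
 a_{0,h}(\bm v_h,\bm v_h)=\tfrac{2\mu}{d}\norm{D_h\bm v_h}_2^2+\sum_a r_{0,a}(\bm v_h,\bm v_h).
\]
Subtracting, we obtain
\[
 a_{\lambda,h}(\bm v_h,\bm v_h)-a_{0,h}(\bm v_h,\bm v_h)-\lambda\norm{D_h\bm v_h}_2^2=\sum_a\bigl(r_{\lambda,a}-r_{0,a}\bigr)(\bm v_h,\bm v_h).
\]
By the left inequality of \eqref{eq:localremainderbound}, the right-hand side is nonnegative, which proves the lower bound in \eqref{eq:reduced-equivalence}.

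\emph{Step 3: upper bound.} The right inequality of \eqref{eq:localremainderbound} gives
\[
 \sum_a r_{\lambda,a}\le C_{\rm loc}\sum_a r_{0,a}\le C_{\rm loc}\,a_{0,h}(\bm v_h,\bm v_h),
\]
since the spherical part of \(a_{0,h}\) is nonnegative. For the spherical term, \((\lambda+2\mu/d)\norm{D_h\bm v_h}_2^2\) splits into \(\lambda\norm{D_h\bm v_h}_2^2\) plus \(\tfrac{2\mu}{d}\norm{D_h\bm v_h}_2^2\), and the latter is at most \(a_{0,h}(\bm v_h,\bm v_h)\) by Step 2. Hence \(C_*=C_{\rm loc}+1\) works. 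It is independent of \(h\) and \(\lambda/\mu\), because \(C_{\rm loc}\) is.

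\emph{Step 4: the equivalence \eqref{eq:shear-jump-equivalence}.} This is the main step and the only place where \cref{ass:discrete-stability} enters.

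For the upper bound, take \(\lambda=0\) in \eqref{eq:localschurdual}. Each local supremum over \(\widehat{\underline{\Sigma}}_a\) is bounded by a supremum over \(\underline{\Sigma}_{h,a}\), where \(m_{0,a}=(2\mu)^{-1}\norm{\cdot}^2_{\D_a}\). Local versions of \cref{lem:bh-continuity} bound \(|b_a(\underline\tau_a,\bm v_h)|\) by \(C\norm{\underline\tau_a}_{\D_a}\) times the jump seminorm restricted to the faces in \(\D_a\). Summing, with bounded face multiplicity, gives \(a_{0,h}\le C\mu\norm{\bm v_h}_{1,h}^2\).

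For the lower bound, write \(a_{0,h}=\min_{\boldsymbol\eta}\) of the \(\lambda=0\) displacement--rotation Gram form \(\mathcal K_{0,h}\) from the proof of \cref{lem:mixed-stability-criterion}. Since \(M_{0,h}=(2\mu)^{-1}M_{L^2,h}\), this Gram form equals \(2\mu\) times the square of the mixed supremum in \eqref{eq:discrete-infsup}. By \eqref{eq:discrete-infsup}, that supremum is at least \(\beta_d^2(\norm{\bm v_h}_{1,h}^2+\norm{\underline\eta_h}^2)\ge\beta_d^2\norm{\bm v_h}_{1,h}^2\). Minimizing over \(\underline\eta_h\) then yields \(a_{0,h}\ge2\mu\beta_d^2\norm{\bm v_h}_{1,h}^2\).

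The delicate point is to justify this identification carefully. We must check that the global minimization over rotations agrees with the vertexwise elimination. This holds because \(M_\sigma\) and \(C\) are block diagonal over the interaction regions, so the global minimization decouples into independent local ones.
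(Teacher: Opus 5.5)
Your proposal is correct and follows the paper's proof. You obtain \eqref{eq:reduced-equivalence} by summing the local spherical splitting and using both remainder bounds, with an explicit constant such as \(C_*=C_{\rm loc}+1\). You obtain \eqref{eq:shear-jump-equivalence} from the min-over-rotations Schur-complement characterization, using the inf--sup estimate for the lower bound and \(\underline{\eta}_h=0\) together with coupling continuity for the upper bound. The only cosmetic difference is that you apply the continuity bound patch by patch instead of globally through \cref{lem:bh-continuity}. The two are equivalent, because dropping the rotation globally is the same as dropping it on every interaction region.
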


\begin{proof}
Summing \eqref{eq:localsphericalenergy} and using
\eqref{eq:localremainderbound} proves \eqref{eq:reduced-equivalence}.  To prove
\eqref{eq:shear-jump-equivalence}, the Schur-complement
characterization used in the proof of
\cref{lem:localschur} gives
\begin{equation*}
 a_{0,h}(\bm v_h,\bm v_h)
 =\min_{\underline{\eta}_h\in\underline{\Gamma}_h}\sup_{0\neq\underline{\tau}_h\in\underline{\Sigma}_h}
 \frac{\bigl(b_h(\underline{\tau}_h,\bm v_h)+c_h(\underline{\tau}_h,\underline{\eta}_h)\bigr)^2}
      {(\Acal_0\underline{\tau}_h,\underline{\tau}_h)}.
\end{equation*}
Since the inf-sup estimate in \cref{ass:discrete-stability} holds for every
\(\underline{\eta}_h\in\underline{\Gamma}_h\), taking the minimum gives
\(a_{0,h}(\bm v_h,\bm v_h)\gtrsim\mu\|\bm v_h\|_{1,h}^2\).
  For the reverse bound, choose \(\underline{\eta}_h=0\) in the minimum, use
\cref{lem:bh-continuity}, and note that
\((\Acal_0\underline{\tau}_h,\underline{\tau}_h)=(2\mu)^{-1}\norm{\underline{\tau}_h}_{L^2(\Omega)}^2\).
\end{proof}

To apply nearly singular subspace-correction estimates, we retain the exact
parameter dependence supplied by \cref{lem:sphericalsplit}.  Define
\begin{equation}
 q_h(\bm v_h,\bm w_h)=(D_h\bm v_h)^{\trans}D_h\bm w_h.
 \label{eq:qh}
\end{equation}
We further define the bilinear form
\(s_{\lambda,h}\) by
\begin{equation}
 \mu s_{\lambda,h}(\bm v_h,\bm w_h)
 =\frac{2\mu}{d}q_h(\bm v_h,\bm w_h)+\sum_a r_{\lambda,a}(\bm v_h,\bm w_h).
 \label{eq:slambdadefinition}
\end{equation}
Then the reduced form has the exact representation
\begin{equation}
 \frac{1}{\mu}a_{\lambda,h}
 =s_{\lambda,h}+tq_h,
 \qquad t=\lambda/\mu.
 \label{eq:exactnearlysingularform}
\end{equation}
Indeed, the spherical contribution in \eqref{eq:localsphericalenergy} is
\((\lambda+2\mu/d)\norm{D_h\bm v_h}_2^2\).  Moreover, setting
\(s_h=\mu^{-1}a_{0,h}\), the remainder bounds in \eqref{eq:localremainderbound} yield
\begin{equation}
 c_s s_h(\bm v_h,\bm v_h)
 \leq s_{\lambda,h}(\bm v_h,\bm v_h)
 \leq C_s s_h(\bm v_h,\bm v_h),
 \label{eq:slambdaequivalence}
\end{equation}
with constants independent of \(h\) and \(t\).  These bounds allow decompositions
in the fixed shear norm \(s_h\) to control the parameter-dependent form
\(s_{\lambda,h}\) in \eqref{eq:exactnearlysingularform}.

\subsection{Exactness of the cell-centered complex}
\label{sec:exactness}

The volumetric form in \eqref{eq:exactnearlysingularform} singles out
\(\ker D_h\) as the limiting constraint space.  To decompose this space using the
potential transfers \eqref{eq:Pk2d} and \eqref{eq:Pk3d}, we establish exactness of
the cell-centered complex.  Let \(G,M\in\R^{(n+1)\times n}\) be the signed and unsigned
one-dimensional vertex--cell incidence matrices.  Both have rank \(n\), and their
left nullspaces are generated by the constant and alternating vectors.  Hence
\begin{equation}
  \dim\bigl(\range(G)\cap\range(M)\bigr)=n-1.
  \label{eq:GMintersection}
\end{equation}
With \(\Theta_h=\diag(\theta_a)\) from \eqref{eq:discretediv}, the unweighted
incidence \(\widehat D_h=\Theta_h^{-1}D_h\) has the componentwise tensor-product form
\begin{align}
  \widehat D_h&=[G\otimes M,\;M\otimes G] &&(d=2),
  \label{eq:Dkron2d}\\
  \widehat D_h&=[G\otimes M\otimes M,\;M\otimes G\otimes M,\;
        M\otimes M\otimes G] &&(d=3).
  \label{eq:Dkron3d}
\end{align}
Since \(\Theta_h\) is invertible, these blocks determine \(\ker D_h\).
We now record the discrete curl realization, the cancellation identity, and the
transfer compatibility used in the exactness argument.

In two dimensions, a zero-boundary vertex potential \(\psi_{ij}\) is mapped to
the cell-centered vector field
\begin{align*}
 (K_h\psi)_{ij,1}
 &=\frac{\psi_{i-1,j}+\psi_{i,j}-\psi_{i-1,j-1}-\psi_{i,j-1}}{2h},\\
 (K_h\psi)_{ij,2}
 &=-\frac{\psi_{i,j-1}+\psi_{i,j}-\psi_{i-1,j-1}-\psi_{i-1,j}}{2h}.
\end{align*}
The two incidence contributions cancel directly, giving \(\widehat D_hK_h=0\)
and hence \(D_hK_h=0\).

For three dimensions, index cells by \(j=0,\ldots,n-1\) and vertices by
\(k=0,\ldots,n\).  Fix
\begin{align*}
 G_{k,j}&=\delta_{k,j+1}-\delta_{k,j},
 &M_{k,j}&=\delta_{k,j+1}+\delta_{k,j},\\
 R_{k\ell}&=\delta_{k,n-\ell},
 &A&=\tfrac12M^{\trans}R,\quad L=G^{\trans}R.
\end{align*}
The vertex coordinates of a potential are reversed in all directions.  An
\(x\)-edge cochain retains increasing order along \(x\) and reverses its two
transverse vertex indices; the \(y\)- and \(z\)-components follow cyclically.
Edge orientations remain the positive physical coordinate directions.  With full
arrays understood before imposing the relative boundary conditions, the matching
gradient is
\[
 \nabla_h\psi=
 \begin{bmatrix}
 L\otimes I_{n+1}\otimes I_{n+1}\\
 I_{n+1}\otimes L\otimes I_{n+1}\\
 I_{n+1}\otimes I_{n+1}\otimes L
 \end{bmatrix}\psi.
\]
Nodal potentials vanish on the boundary, and edge cochains vanish whenever a
transverse vertex index is a boundary index.  A direct computation gives
\begin{equation}
 GA=-\tfrac12ML
 \qquad\hbox{on vectors whose first and last entries vanish.}
 \label{eq:GA-identity}
\end{equation}
Indeed, \(GA+ML/2\) is supported only at \((0,n)\) and \((n,0)\).
For oriented edge integrals, the cell-centered curl is
\begin{align*}
 (K_he)_x&=h^{-2}\bigl[(A\otimes L\otimes I_n)e_z-(A\otimes I_n\otimes L)e_y\bigr],\\
 (K_he)_y&=h^{-2}\bigl[(I_n\otimes A\otimes L)e_x-(L\otimes A\otimes I_n)e_z\bigr],\\
 (K_he)_z&=h^{-2}\bigl[(L\otimes I_n\otimes A)e_y-(I_n\otimes L\otimes A)e_x\bigr].
\end{align*}
The factor \(h^{-2}\) converts face circulations to displacement values.
Applying \eqref{eq:GA-identity} in the transverse directions cancels the
incidence contributions, so \(\widehat D_hK_h=0\) and \(D_hK_h=0\).
The matching gradient gives \(K_h\nabla_h=0\) by the same cancellation of mixed
differences.

The same coordinate ordering determines the coarse transfers.  Define the
vertex and edge permutation matrices
\begin{align*}
 \mathcal R_{0,h}&=R\otimes R\otimes R,\\
 \mathcal R_{1,h}&=\operatorname{diag}
 (I_n\otimes R\otimes R,\ R\otimes I_n\otimes R,\ R\otimes R\otimes I_n).
\end{align*}
If a superscript \({\rm std}\) denotes increasing physical-coordinate ordering,
then
\begin{align*}
 \nabla_h&=\mathcal R_{1,h}^{\trans}\nabla_h^{\rm std}\mathcal R_{0,h},\\
 Q_e&=\mathcal R_{1,h}^{\trans}Q_e^{\rm std}\mathcal R_{1,H},
 &Q_\psi&=\mathcal R_{0,h}^{\trans}Q_\psi^{\rm std}\mathcal R_{0,H}.
\end{align*}
These permutations preserve the relative boundary conditions, so coarse-edge
summation and \(Q_e\nabla_H=\nabla_hQ_\psi\) carry over to this ordering.
Using the same edge labels in the tree--cotree gauge then makes the gauge
compatible with \(\nabla_H\).

\begin{lemma}[Exact discrete potential representation]
\label{lem:exactcomplex}
On an \(n^d\) Cartesian grid with homogeneous displacement data,
\begin{align}
  \dim\ker(D_h)&=(n-1)^2 &&(d=2),
  \label{eq:kernel-dim-2d}\\
  \dim\ker(D_h)&=2n^3-3n^2+1 &&(d=3).
  \label{eq:kernel-dim-3d}
\end{align}
Furthermore,
\begin{equation}
  \range(K_h)=\ker(D_h).
  \label{eq:exactcomplex}
\end{equation}
\end{lemma}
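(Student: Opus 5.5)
The plan is to reduce everything to the one-dimensional incidence matrices \(G\), \(M\) (and, in three dimensions, \(A=\tfrac12M^{\trans}R\), \(L=G^{\trans}R\)). First count \(\dim\ker D_h\) from the Kronecker forms \eqref{eq:Dkron2d}--\eqref{eq:Dkron3d}. Then show that \(K_h\) already fills a subspace of that dimension. Since \(\range(K_h)\subseteq\ker(D_h)\) by \eqref{eq:divcurl}, equality of dimensions gives \eqref{eq:exactcomplex}. Because \(\Theta_h\) is invertible, it suffices to work with \(\widehat D_h\).

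\emph{Step 1: \(\dim\ker D_h\).} Put \(U=\range(G)\) and \(W=\range(M)\) in \(\R^{n+1}\). Both have dimension \(n\), and by \eqref{eq:GMintersection} \(\dim(U\cap W)=n-1\). Choose a basis of \(\R^{n+1}\) adapted to the splitting
\[
\R^{n+1}=(U\cap W)\oplus U'\oplus W'\oplus N,
\]
where \(U=(U\cap W)\oplus U'\) and \(W=(U\cap W)\oplus W'\), so the four blocks have dimensions \(n-1,1,1,1\). Every range \(\range(G\otimes M)=U\otimes W\), \(\range(M\otimes G)=W\otimes U\) (and the three analogous ranges for \(d=3\)) is then a direct sum of tensor products of these basis blocks. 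Hence \(\dim\range(\widehat D_h)\) equals the total dimension of the union of the block types that occur; this avoids the failure of inclusion--exclusion for three general subspaces.

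For \(d=2\) this gives
\[
\rank\widehat D_h=2n^2-(n-1)^2,\qquad \dim\ker D_h=2n^2-\rank\widehat D_h=(n-1)^2.
\]
For \(d=3\) one counts block triples in which some factor is of type \(c\) or \(u\) (the \(G\)-slot) and the remaining two are of type \(c\) or \(w\). Then \(\dim\ker D_h=3n^3-\rank\widehat D_h\), which should reduce to \(2n^3-3n^2+1\); this is routine bookkeeping.

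\emph{Step 2, \(d=2\).} The formula for \(K_h\) has the tensor form \((K_h\psi)_1\propto(M^{\trans}\otimes G^{\trans})\psi\) and \((K_h\psi)_2\propto-(G^{\trans}\otimes M^{\trans})\psi\) on zero-boundary \(\psi\). Both \(G^{\trans}\) and \(M^{\trans}\) are injective on vectors whose first and last entries vanish: if \(x_{j+1}\mp x_j=0\) for all \(j\) and \(x_0=0\), then \(x=0\). Hence \(K_h\) is injective on \(\Psi_h^0\), so \(\dim\range K_h=(n-1)^2=\dim\ker D_h\) and \eqref{eq:exactcomplex} follows.

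\emph{Step 3, \(d=3\).} The relative edge space has dimension \(3n(n-1)^2\) and the zero-boundary nodal space has dimension \((n-1)^3\). Since \(\nabla_h\) is injective on zero-boundary potentials,
\[
\dim\bigl(\bm E_h^0/\nabla_h\Psi_h^0\bigr)=3n(n-1)^2-(n-1)^3=(n-1)^2(2n+1)=2n^3-3n^2+1,
\]
which matches Step 1. It therefore remains to prove \(\ker K_h=\range\nabla_h\). The inclusion \(\supseteq\) is the already-recorded identity \(K_h\nabla_h=0\).

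The reverse inclusion is the main obstacle, because \(K_h\) is not the standard de Rham curl: it composes the curl with cell averaging (the factors \(A\)), and averaging is not injective in general. I would first try simultaneous diagonalization. The matrices \(G\), \(M\), \(A\), \(L\), restricted to relative vectors, are all diagonalized by discrete sine/cosine bases (the alternating mode being the only special one). In such a basis, \(K_h\) and \(\nabla_h\) split into independent fibers indexed by frequency triples. Each fiber is a \(3\times3\) curl symbol with entries that are products of sine and cosine factors, or a degenerate fiber where one frequency is extremal. On each fiber I would verify directly that the kernel of the curl symbol is spanned by the gradient symbol; this gives exactness frequency by frequency. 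The degenerate fibers, where a factor of \(M\) or \(A\) annihilates the alternating mode, are where I expect the care to be needed, and they should account for the \(-3n^2+1\) correction already seen in Step 1.

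If the fiber analysis becomes awkward, the fallback is a direct argument: show that \(K_h\) restricted to the tree--cotree coordinates is injective by propagating zeros from the boundary layer inward, exactly as in the one-dimensional argument of Step 2.
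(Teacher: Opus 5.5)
Your Steps 1 and 2 are the paper's argument. The paper also splits $\R^{n+1}$ into type subspaces, so that sums and intersections of the Kronecker block ranges become unions and intersections of index sets. In 3D this gives $\rank\widehat D_h=(n^3-1)+3n^2$: the first term counts triples with no $u$-factor but at least one $c$-factor, the second triples with exactly one $u$-factor. The paper proves injectivity of $K_h$ on $\Psi_h^0$ in 2D by propagation from the zero boundary, as you do. Your quotient count $3n(n-1)^2-(n-1)^3$ in 3D is also the paper's.

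One slip: there is no fourth summand $N$. Since $(\range G+\range M)^\perp$ is the intersection of the spans of the constant and alternating vectors, which is $\{0\}$, you have $\range G+\range M=\R^{n+1}$. The splitting therefore has dimensions $n-1,1,1$; yours add up to $n+2$. This is harmless, because $N$ never enters a block range.

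The real difference is the 3D inclusion $\ker K_h\subseteq\nabla_h\Psi_h^0$, which you leave as a plan. The paper uses no Fourier analysis:
\begin{itemize}
\item In each coordinate direction, $K_h$ is the average of the ordinary circulations on the two parallel elementary faces of a cell.
\item For a relative cochain, the circulation on every face in $\partial\Omega$ vanishes. Along each grid line the face values therefore satisfy $y_0=0$ and $y_{i-1}+y_i=0$, hence all vanish.
\item So the averaging you identified as the obstacle is injective precisely because of the relative boundary condition. This is the same propagation as in your Step 2.
\item Exactness of the cubical cochain complex (face boundaries generate the cycle space) and path integration from the connected boundary then give $\ker K_h=\nabla_h\Psi_h^0$.
\end{itemize}
Your fallback propagates on tree--cotree coefficients, where no such simple recursion is available. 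The recursion lives on face circulations.

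Your fiber plan does close, but the degeneracy is not at the alternating mode. Expand each transverse factor of an edge cochain in vertex sines (frequencies $1,\dots,n-1$) and the edge-direction factor in cell cosines (frequencies $0,\dots,n-1$). Then $A$ and $L$ act by $\alpha_p=\pm\cos(p\pi/2n)$ and $\hat{\ell}_p=\pm 2\sin(p\pi/2n)$, with signs from $R$; both are nonzero in this range.
\begin{itemize}
\item On a generic fiber $(p,q,r)\in\{1,\dots,n-1\}^3$ the symbol is $\diag(\alpha_p,\alpha_q,\alpha_r)(\hat{\ell}\times\cdot)$. Its kernel is $\operatorname{span}\{\hat{\ell}\}$, which is exactly the gradient symbol.
\item The degenerate fibers are those with exactly one frequency equal to $0$, i.e.\ the constant cell mode in the edge's own direction. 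They carry a single edge unknown, no gradient, and an injective symbol.
\item Fibers with two zero frequencies are empty.
\end{itemize}
This yields $\rank K_h=2(n-1)^3+3(n-1)^2=2n^3-3n^2+1$ directly. The paper's route is shorter and purely topological. Yours depends on the uniform tensor-product structure, but it exposes the frequency-by-frequency structure of $K_h$, which the topological argument does not provide.
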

\begin{proof}
The inclusion \(\range(K_h)\subseteq\ker(D_h)\) becomes equality once their
dimensions agree.  Since \(\rank(D_h)=\rank(\widehat D_h)\), the dimension of
the volumetric kernel can be computed from the tensor-product blocks of
\(\widehat D_h\).  Let
\(X=\range(G)\cap\range(M)\).  Since the left nullspaces of \(G\) and \(M\)
are generated by the constant and alternating vectors, respectively,
\(\dim X=n-1\).  In two dimensions, the ranges of the two blocks of \(\widehat D_h\)
intersect in \(X\otimes X\) (the intersection of two tensor products of
subspaces is the tensor product of the intersections), and hence
\begin{equation*}
 \rank(D_h)=2n^2-(n-1)^2=n^2+2n-1,
 \qquad \dim\ker(D_h)=(n-1)^2.
\end{equation*}
In three dimensions, a common direct-sum decomposition of the block ranges
allows the same dimension calculation.  Fix
\(g\in\range(G)\setminus X\) and \(m\in\range(M)\setminus X\); a vector of
\(\range(G)\) lying in \(\range(M)\) belongs to \(X\), so
\(\R^{n+1}=X\oplus\operatorname{span}\{g\}\oplus\operatorname{span}\{m\}\).
Consequently \((\R^{n+1})^{\otimes3}\) is the direct sum of the \(27\) type
subspaces \(Y_1\otimes Y_2\otimes Y_3\) with
\(Y_i\in\{X,\operatorname{span}\{g\},\operatorname{span}\{m\}\}\), and each
block range, for example
\(\range(G)\otimes\range(M)\otimes\range(M)\) with
\(\range(G)=X\oplus\operatorname{span}\{g\}\) and
\(\range(M)=X\oplus\operatorname{span}\{m\}\), is the direct sum of the type
subspaces it contains.  Sums and intersections of the three block ranges
therefore correspond to unions and intersections of their index sets, for which
inclusion--exclusion is valid.  In particular the pairwise intersections have
dimension \(n(n-1)^2\), the triple intersection is \(X^{\otimes3}\), and
counting the type subspaces contained in at least one block range gives
\begin{equation*}
 \rank(D_h)=3n^3-3n(n-1)^2+(n-1)^3=n^3+3n^2-1,
\end{equation*}
and \(\dim\ker(D_h)=2n^3-3n^2+1\).

It remains to compute the curl rank.  In two dimensions, \(K_h\psi=0\) equates
opposite diagonal values on each cell; propagation
to the zero boundary gives \(\psi=0\), so
\(\rank(K_h)=(n-1)^2\).  In three dimensions, extend a relative edge cochain by
zero to boundary edges and undo the above permutations.  In each coordinate
direction, the cell-centered curl averages the circulations on the two parallel elementary faces of a cell, so a
vanishing discrete curl forces consecutive face circulations along every grid
line to sum to zero.  For a relative cochain every face contained in
\(\partial\Omega\) has zero circulation, and a vector \(y\) with \(y_0=0\) and
\(y_{i-1}+y_i=0\) for all \(i\) vanishes; hence every elementary face has zero
circulation.  Because these face boundaries generate the cycle space of a
cubical box, path integration from a boundary vertex gives
\begin{equation*}
 \ker(K_h)=\nabla_h\Psi_h^0,
 \qquad
 \rank(K_h)=3n(n-1)^2-(n-1)^3=2n^3-3n^2+1.
\end{equation*}
The cancellation identity and the matching dimensions give
\(\range(K_h)=\ker(D_h)\) in both dimensions, which is the exact potential
representation required by the kernel coarse correction.
\end{proof}

\begin{corollary}[Uniform kernel transfer]
\label{cor:kerneltransfer}
The transfers \eqref{eq:Pk2d} and \eqref{eq:Pk3d} have full column rank and
satisfy \(D_hP_{\Z}=0\), and
\begin{equation}
  a_{\lambda,h}(P_{\Z}\bm z_H,P_{\Z}\bm z_H)
  \simeq a_{0,h}(P_{\Z}\bm z_H,P_{\Z}\bm z_H)
  \label{eq:coarsekernelenergy}
\end{equation}
uniformly in \(h\) and \(\lambda/\mu\).
\end{corollary}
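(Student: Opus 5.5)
The plan has three parts, matching the three claims: kernel preservation, full column rank, and the energy equivalence.

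\emph{Kernel preservation.} The identity \(D_hP_{\Z}=0\) follows from \eqref{eq:divcurl}. Since \(P_{\Z}=K_hQ_\psi\) in two dimensions and \(P_{\Z}=K_hQ_eG_H\) in three, the range of \(P_{\Z}\) lies in \(\range(K_h)\subseteq\ker D_h\).

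\emph{Energy equivalence.} Once kernel preservation is known, \eqref{eq:coarsekernelenergy} is immediate from \cref{thm:reduced-equivalence}. Setting \(\bm v_h=P_{\Z}\bm z_H\) makes the volumetric term \(\lambda\norm{D_h\bm v_h}_2^2\) vanish in \eqref{eq:reduced-equivalence}. This gives \(a_{0,h}\le a_{\lambda,h}\le C_*a_{0,h}\), with \(C_*\) independent of \(h\) and \(\lambda/\mu\).

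\emph{Full column rank in two dimensions.} Bilinear interpolation \(Q_\psi\) of zero-boundary nodal potentials is injective, because the coarse nodal values are reproduced at the coarse vertices, which are fine vertices by nesting. The proof of \cref{lem:exactcomplex} shows that \(K_h\) is injective on \(\Psi_h^0\). The composition \(P_{\Z}\) is therefore injective.

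\emph{Full column rank in three dimensions.} This is the main obstacle, since \(K_h\) has the nontrivial kernel \(\nabla_h\Psi_h^0\). Suppose \(K_hQ_eG_H\bm z_H=0\). By \cref{lem:exactcomplex}, \(Q_eG_H\bm z_H=\nabla_h\phi_h\) for some \(\phi_h\in\Psi_h^0\). I would then show that \(G_H\bm z_H\) is itself a coarse gradient. Summing over the fine subedges of each coarse edge, the first identity in \eqref{eq:edge-transfer-commuting} gives
\[
(G_H\bm z_H)_E=\sum_{e\subset E}(\nabla_h\phi_h)_e=\phi_h(\text{end})-\phi_h(\text{start}).
\]
Here the endpoints of \(E\) are coarse vertices, so \(G_H\bm z_H=\nabla_H\psi_H\), where \(\psi_H\) is the restriction of \(\phi_h\) to coarse vertices. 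This restriction vanishes on the boundary because \(\phi_h\) does. The tree--cotree gauge then applies: a gauge-fixed vector that is a zero-boundary gradient must vanish. This is because its tree-edge values are zero by construction, and a gradient vanishing on a spanning tree rooted at the collapsed boundary has \(\psi_H=0\). Hence \(\bm z_H=0\).

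As a consistency check, the resulting column count \(2n_H^3-3n_H^2+1\) agrees with \eqref{eq:kerneldim3d} and with the coarse-grid rank of \(K_H\) from \cref{lem:exactcomplex}. The delicate point is the coarse-edge summation step, which depends on the normalization of the edge degrees of freedom in \eqref{eq:edge-transfer-commuting} under the permuted orderings. The paper has already recorded that these permutations preserve both the summation identity and the relative boundary conditions, so I expect this step to go through.
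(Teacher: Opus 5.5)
Your proposal is correct and matches the paper's proof step for step: you get $D_hP_{\Z}=0$ from $\range(P_{\Z})\subseteq\range(K_h)\subseteq\ker D_h$, and the energy equivalence from \cref{thm:reduced-equivalence} once the volumetric term vanishes. For injectivity you use $Q_\psi$ and $K_h|_{\Psi_h^0}$ in two dimensions, and in three dimensions the telescoping coarse-edge sum, which shows that $G_H\bm z_H$ is a zero-boundary coarse gradient vanishing on the tree edges and hence zero. The only additions are your short justification that $Q_\psi$ is injective through reproduction of nodal values at nested coarse vertices, and the column-count consistency check; the paper's proof needs neither.
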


\begin{proof}
Both transfers have the form \(P_{\Z}=K_hQ\), so
\(\range(P_{\Z})\subset\range(K_h)=\ker(D_h)\).

For injectivity, suppose \(K_hQ\bm z_H=\bm 0\).  In two dimensions \(K_h\) is
injective on zero-boundary potentials by the proof of \cref{lem:exactcomplex},
and \(Q=Q_\psi\) is injective, so \(\bm z_H=\bm 0\).  In three dimensions
\(Q\bm z_H=Q_eG_H\bm z_H\in\ker(K_h)=\nabla_h\Psi_h^0\), say
\(Q_eG_H\bm z_H=\nabla_h\psi_h\).  Summing over the fine subedges of an oriented
coarse edge, the fine gradient telescopes, and the first identity in
\eqref{eq:edge-transfer-commuting} shows that \(G_H\bm z_H\) is the coarse
gradient of the restriction of \(\psi_h\) to the coarse vertices.  This gradient
vanishes on all tree edges, so its potential is constant along the spanning tree
of the relative grid graph and therefore zero, giving \(G_H\bm z_H=\bm 0\) and
\(\bm z_H=\bm 0\).

The estimate \eqref{eq:coarsekernelenergy} follows from
\cref{thm:reduced-equivalence}.
\end{proof}

\subsection{Decompositions over the two coarse spaces and patches}

The inclusion \(\V_H^{\Z}\subset\Z_h\) makes the kernel coarse energy uniformly
equivalent to shear energy.  The next step is to distribute displacement errors
over the coarse and patch spaces with bounded total energy.  We first construct
a shear-stable decomposition of general displacements, then a decomposition
whose components remain in \(\Z_h\).

Let \(\V_i=R_i^{\trans}\bm U_i\) be the fine vertex-patch spaces, with the
coarsening ratio \(m=H/h\) and patch enlargement fixed under refinement.

\begin{lemma}[Shear-stable two-grid decomposition]
\label{lem:shear-decomposition}
For every \(\bm v_h\in \bm U_h\), there are \(\bm v_U\in\V_H^U\) and
\(\bm v_i\in\V_i\) such that
\begin{equation}
  \bm v_h=\bm v_U+\sum_{i=1}^{N_p}\bm v_i,
  \qquad
  s_h(\bm v_U,\bm v_U)+\sum_{i=1}^{N_p}s_h(\bm v_i,\bm v_i)
  \leq C_U s_h(\bm v_h,\bm v_h).
  \label{eq:shear-stable-decomposition}
\end{equation}
The constant \(C_U\) depends on \(d\), \(m\), and the fixed patch enlargement, but
not on \(h\).
\end{lemma}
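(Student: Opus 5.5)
By \eqref{eq:shear-jump-equivalence} in \cref{thm:reduced-equivalence}, \(s_h=\mu^{-1}a_{0,h}\) is uniformly equivalent to \(\norm{\cdot}_{1,h}^2\). I will therefore construct the decomposition and prove \eqref{eq:shear-stable-decomposition} with \(\norm{\cdot}_{1,h}\) in place of \(s_h\). The constants change only by factors independent of \(h\), \(\lambda\), and \(\mu\). This is a standard two-level decomposition for a scalar cell-centered (finite-volume type) Laplacian-like energy, applied componentwise.

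\emph{Step 1 (coarse component).} Define a coarse quasi-interpolant \(\Pi_H:\bm U_h\to\bm U_H\) by averaging \(\bm v_h\) over the \(m^d\) fine cells of each coarse cell, and set \(\bm v_U=P_U\Pi_H\bm v_h\). I will need two estimates with constants depending only on \(d\) and \(m\):
\begin{equation*}
 \norm{P_U\Pi_H\bm v_h}_{1,h}\lesssim\norm{\bm v_h}_{1,h},
 \qquad
 \sum_{M\in\T_h^M}h^{-2}|M|\,|(\bm v_h-P_U\Pi_H\bm v_h)_M|^2\lesssim\norm{\bm v_h}_{1,h}^2 .
\end{equation*}
Both follow from local arguments. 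On a coarse neighborhood (the coarse cell and its neighbors, a fixed number of fine cells), \(\bm v_h-P_U\Pi_H\bm v_h\) vanishes on constants, because \(P_U\) reproduces constants, including in the boundary half-strips via the constant extension. A discrete Poincaré inequality on this fixed reference cluster, together with scaling, then bounds the local \(L^2\) defect by \(h^2\) times the local jump seminorm.

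Near \(\partial\Omega\), constants are not killed by the homogeneous Dirichlet jumps, so a different argument is needed. Since \(P_U\) extends constantly, the boundary jump of \(\bm v_U\) is controlled by \(|\Pi_H\bm v_h|\) on the boundary coarse cells. I will bound this using a discrete Friedrichs inequality on the boundary layer, where the jump-norm contribution of the boundary faces appears explicitly. The \(H^1\)-stability of \(P_U\Pi_H\) follows by writing jumps of \(P_U\Pi_H\bm v_h\) as differences of neighboring coarse averages, then applying Cauchy--Schwarz and the local Poincaré bound. I expect this boundary treatment, and keeping every constant dependent only on \(m\), to be the main technical obstacle. If the direct route is awkward, I will first replace \(\Pi_H\) by an averaging that vanishes on boundary-adjacent coarse cells, so that the boundary jump is handled purely by Friedrichs.

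\emph{Step 2 (patch components).} Let \(\bm w=\bm v_h-\bm v_U\). Using a cell-wise partition of unity \(\{\chi_i\}\) subordinate to the vertex patches, for instance \(\chi_i(M)=1/\#\{i:M\in\omega_i\}\) restricted to the basic \(2^d\)-cell patch, set \(\bm v_i=\chi_i\bm w\in\V_i\). The sum reproduces \(\bm w\) exactly. Each jump satisfies
\begin{equation*}
 \jump{\chi_i\bm w}=\chi_i^\pm\jump{\bm w}+\jump{\chi_i}\bm w^\mp,
\end{equation*}
with \(|\jump{\chi_i}|\le 1\). Hence \(\norm{\bm v_i}_{1,h}^2\) is bounded by local jumps of \(\bm w\) plus \(h^{-2}\) times the local \(L^2\) norm of \(\bm w\), the latter from the face weights \(h_e^{-1}|e|\sim h^{-2}|M|\). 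Bounded overlap of the patches makes the sum over \(i\) at most a constant times \(\norm{\bm w}_{1,h}^2+h^{-2}\norm{\bm w}_{L^2}^2\), and Step 1 bounds this by \(\norm{\bm v_h}_{1,h}^2\).

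Combining the two steps gives \eqref{eq:shear-stable-decomposition}, with \(C_U\) depending on \(d\), \(m\), and the patch overlap only. Enlarging the patches only increases overlap by a fixed amount, so it changes \(C_U\) by a bounded factor.
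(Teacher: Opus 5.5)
Your proposal is correct and takes essentially the same route as the paper. Both use the cell-average quasi-interpolant $P_U\Pi_H$ and local Poincaré/Friedrichs bounds on a finite set of reference coarse neighborhoods; the paper covers the boundary uniformly by noting that the right-hand jump seminorm includes the physical-boundary faces, so its kernel lies in the left-hand kernel. Both then split the remainder cellwise into patch pieces controlled by an inverse estimate. The only difference is cosmetic: the paper assigns each fine cell to a single vertex (a $0/1$ partition), which avoids your product-rule step; if you keep the fractional $\chi_i$, normalize by the number of basic patches containing $M$, not the enlarged $\omega_i$, so that $\sum_i\chi_i=1$.
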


\begin{proof}
By \eqref{eq:shear-jump-equivalence} and the definition of $s_h$, it is enough to work with the jump norm
\eqref{eq:discreteH1norm}. For a coarse cell \(T\), let \(\omega_T\) be the union of \(T\) and the coarse
cells sharing at least one vertex with \(T\).  Let
\begin{equation*}
 (\Pi_H\bm v_h)_T=m^{-d}\sum_{M\subset T}(\bm v_h)_M,
 \qquad I_H^U\bm v_h=P_U\Pi_H\bm v_h.
\end{equation*}
By the tensor-product construction
of \(P_U\), the restriction \(I_H^U\bm v_h|_T\) depends only on
\(\bm v_h|_{\omega_T}\), each \(\omega_T\) is connected, and only finitely many
reference configurations occur.  Let \(\norm{\cdot}_{1,h,T}\) count the fine
primary-face jumps incident to cells in \(T\), including physical-boundary terms.
All interpolation values in these jumps depend only on \(\omega_T\).
On each reference neighborhood, finite-dimensional boundedness gives
\begin{equation}
 \norm{I_H^U\bm v_h}_{1,h,T}^2+
 H^{-2}\norm{\bm v_h-I_H^U\bm v_h}_{L^2(T)}^2
 \leq C_{d,m}\sum_{e\subset\omega_T}
 h_e^{-1}\norm{\jump{\bm v_h}}_{L^2(e)}^2.
 \label{eq:reference-aggregate-estimate}
\end{equation}
Here the right-hand sum runs over faces internal to \(\omega_T\) and the
faces on \(\partial\omega_T\cap\partial\Omega\).
If it vanishes, connectedness makes \(\bm v_h\) constant on \(\omega_T\); that
constant is zero whenever the neighborhood meets \(\partial\Omega\).
Constant reproduction by \(P_U\Pi_H\) therefore makes the left-hand side zero as
well.  Thus the right-hand kernel is contained in the left-hand kernel, and the
estimate follows on the finite-dimensional quotient.  Both sides scale as
\(H^{d-2}\); summing over the finitely overlapping neighborhoods gives
\begin{equation}
 \norm{I_H^U\bm v_h}_{1,h}\leq C_I\norm{\bm v_h}_{1,h},
 \qquad
 \norm{\bm v_h-I_H^U\bm v_h}_{L^2(\Omega)}
 \leq C_AH\norm{\bm v_h}_{1,h}.
 \label{eq:cell-quasi-interpolation}
\end{equation}
The constants depend only on \(d\) and \(m\); the reference configurations include the finitely
many face, edge, and corner aggregate types.

Put \(\bm e_h=\bm v_h-I_H^U\bm v_h\).  Assign every fine cell to its lexicographically first
incident vertex, and let \(\bm v_i\) be the corresponding part of \(\bm e_h\).  Then
\(\bm e_h=\sum_i \bm v_i\), \(\bm v_i\in\V_i\), and the cellwise inverse estimate gives
\begin{equation*}
 \sum_i\norm{\bm v_i}_{1,h}^2
 \leq C h^{-2}\norm{\bm e_h}_{L^2(\Omega)}^2
 \leq C(H/h)^2\norm{\bm v_h}_{1,h}^2.
\end{equation*}
Together with \eqref{eq:shear-jump-equivalence}, this proves
\eqref{eq:shear-stable-decomposition}.
\end{proof}

\begin{lemma}[Exact local decomposition of the incompressible kernel]
\label{lem:kernel-decomposition}
Define
\begin{equation}
 C_{\Z}(h,H)=
 \sup_{0\neq \bm z_h\in\Z_h}\ \inf_{\substack{
 \bm z_h=\bm z_{\Z}+\sum_i \bm z_i\\
 \bm z_{\Z}\in\V_H^{\Z},\ \bm z_i\in\V_i\cap\Z_h}}
 \frac{s_h(\bm z_{\Z},\bm z_{\Z})+
       \sum_i s_h(\bm z_i,\bm z_i)}{s_h(\bm z_h,\bm z_h)}.
 \label{eq:kernel-decomposition-constant}
\end{equation}
Then \(C_{\Z}(h,H)<\infty\) in both dimensions.  Adding the coarse kernel space
cannot increase this constant.
\end{lemma}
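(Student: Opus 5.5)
The statement is for a fixed grid pair, so finiteness is essentially a finite-dimensional claim. The real content is that the local kernel pieces \(\V_i\cap\Z_h\) (with the coarse kernel space) span \(\Z_h\). The plan has three steps: show that spanning, deduce finiteness by compactness, and then note that adding \(\V_H^{\Z}\) only enlarges the set of admissible decompositions.

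\emph{Spanning.} By \cref{lem:exactcomplex}, \(\Z_h=\range(K_h)\). Every \(\bm z_h\in\Z_h\) can therefore be written as \(K_h\psi\) in two dimensions, or as \(K_he\) with \(e\in\bm E_h^0\) in three dimensions.

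In two dimensions, \(\psi=\sum_b\psi_b\phi_b\) over interior fine vertices \(b\), where \(\phi_b\) is the nodal indicator. From the explicit formula for \(K_h\), the field \(K_h\phi_b\) is supported on the \(2^2\) cells sharing \(b\), which is exactly the basic vertex patch. So \(K_h\phi_b\in\V_{i(b)}\cap\Z_h\), with \(D_hK_h\phi_b=0\) by \eqref{eq:divcurl}.

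In three dimensions, \(e=\sum_E e_E\chi_E\) over interior fine edges \(E\). The field \(K_h\chi_E\) is supported on the four cells containing \(E\), since the averaged face circulations only involve faces containing \(E\). These four cells lie in the \(2^3\) patch of either endpoint of \(E\). Assign each edge to, say, its lower endpoint's patch.

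Grouping the terms by patch gives \(\bm z_h=\sum_i\bm z_i\) with \(\bm z_i\in\V_i\cap\Z_h\), taking \(\bm z_{\Z}=0\). If the computations use enlarged patches, each basic patch is contained in some enlarged patch, and the \(3^3\) positive-direction enlargement still contains a basic patch, so the same grouping works.

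\emph{Finiteness.} The decomposition map \(\bm z_h\mapsto(\bm z_i)_i\) is linear on the finite-dimensional space \(\Z_h\), and it is a right inverse of the sum map. Since \(s_h\) is positive definite on \(\bm U_h\) by \cref{thm:reduced-equivalence}, we have \(s_h(\cdot,\cdot)\simeq\mu^{-1}\cdot\mu\|\cdot\|_{1,h}^2\), a norm. Any linear map between finite-dimensional normed spaces is bounded. Hence the quotient in \eqref{eq:kernel-decomposition-constant} is bounded over the unit sphere of \(\Z_h\), and the infimum is at most this bound, so \(C_{\Z}(h,H)<\infty\).

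\emph{Monotonicity.} The admissible set in the infimum includes all decompositions with \(\bm z_{\Z}=0\), so it contains the admissible set of the purely local decomposition. An infimum over a larger set is no larger, so including \(\V_H^{\Z}\) cannot increase \(C_{\Z}(h,H)\).

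\emph{Main obstacle.} The main obstacle is checking the support claim in three dimensions. The curl formula involves the matrices \(A\) and \(L\) composed with the reversal permutations, and one must confirm that \(K_h\chi_E\) touches only the cells around \(E\) once these permutations are undone. I would undo \(\mathcal R_{1,h}\) first and read off the stencil: \(A=\tfrac12 M^{\trans}R\) averages two adjacent vertex entries, and \(L\) differences two adjacent entries, so in the standard ordering each output cell sees only edges on its own boundary.

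Bounded overlap is not needed here, because no uniformity in \(h\) is claimed.
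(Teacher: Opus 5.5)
Your proposal is correct and follows essentially the same route as the paper: you write $\bm z_h$ through $K_h$ via \cref{lem:exactcomplex}, assign the locally supported basis curls to vertex patches (so $\bm z_{\Z}=\bm 0$), bound the resulting quotient by finite-dimensional norm equivalence, and note that admitting $\bm z_{\Z}\neq\bm 0$ only enlarges the set over which the infimum is taken. The one point to tighten is that in three dimensions the edge potential is determined only up to $\nabla_h\Psi_h^0$, so your map $\bm z_h\mapsto(\bm z_i)_i$ is well defined and linear only after you fix a linear lift; the paper does this with the tree--cotree gauge, which makes the coefficient representation injective, but any linear right inverse of $K_h$ would serve.
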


\begin{proof}
By \cref{lem:exactcomplex}, write \(\bm z_h=K_h\psi_h\).  In two dimensions use the
zero-boundary nodal basis; in three dimensions use the tree--cotree gauge following
\eqref{eq:Pk3d}.  Every selected basis curl is supported on the cells incident to one
vertex or edge and therefore belongs to some \(\V_i\cap\Z_h\).  Assigning each curl
to such a patch gives an exact decomposition with \(\bm z_{\Z}=\bm 0\).

Let $\boldsymbol{x}$
be the coefficient vector of $\boldsymbol{z}_h$
with respect to the basis induced by the selected gauge-fixed potential basis through \(K_h\).
Let the matrices \(\mathcal G_h\) and \(\mathcal J_{Z,h}\) be defined by
\(
\boldsymbol x^{\trans}\mathcal G_h\boldsymbol x=s_h(\boldsymbol z_h,\boldsymbol z_h)\) and
\(\boldsymbol x^{\trans}\mathcal J_{Z,h}\boldsymbol x=\sum_i s_h(\boldsymbol z_i,\boldsymbol z_i),
\)
respectively.
The injectivity of the selected potential representation,
together with the coercivity of $s_h$ on $\Z_h$,
makes $\mathcal G_h$ positive definite. Hence,
\[
\begin{aligned}
\sup_{\boldsymbol x\ne0}
\frac{\boldsymbol x^{\trans}\mathcal J_{Z,h}\boldsymbol x}
     {\boldsymbol x^{\trans}\mathcal G_h\boldsymbol x}=
\lambda_{\max}\!\left(
\mathcal G_h^{-1/2}
\mathcal J_{Z,h}
\mathcal G_h^{-1/2}
\right)<\infty.
\end{aligned}
\]
This bounds \eqref{eq:kernel-decomposition-constant}; allowing \(\bm z_{\Z}\neq\bm 0\) can
only lower its infimum.
\end{proof}

Together, these decompositions control the two contributions in
\eqref{eq:exactnearlysingularform}: \cref{lem:shear-decomposition} supplies shear
stability for general displacements, while \cref{lem:kernel-decomposition} gives
the nullspace condition
\begin{equation}
 \Z_h=(\V_H^{\Z}\cap\Z_h)
       +\sum_{i=1}^{N_p}(\V_i\cap\Z_h),
 \label{eq:nullspace-condition}
\end{equation}
with stability constant \(C_{\Z}(h,H)\).  This is the kernel-compatible
decomposition used in parameter-uniform subspace correction for nearly singular
operators \cite{leewuxuzikatanov2007}.

\subsection{The additive split operator and the hybrid method}

The preceding decompositions naturally bound an additive subspace correction, whereas
\cref{alg:twolevel} uses forward and backward patch sweeps around the split coarse
correction.  The next comparison shows that this symmetric hybrid ordering preserves
the relevant spectral bounds.

Let \(\mathcal P_U\), \(\mathcal P_{\Z}\), and \(\mathcal P_i\) be the
\(a_{\lambda,h}\)-orthogonal projections onto \(\V_H^U\), \(\V_H^{\Z}\), and
\(\V_i\), respectively.  Thus, for example,
\begin{equation*}
 \mathcal P_U=P_UA_U^{-1}P_U^{\trans}A_h,
 \quad \mathcal P_{\Z}=P_{\Z}A_{\Z}^{-1}P_{\Z}^{\trans}A_h, \quad
 \mathcal P_i=R_i^{\trans}A_i^{-1}R_iA_h.
\end{equation*}
Set
\begin{equation}
 \mathcal P_c=\mathcal P_U+\mathcal P_{\Z},
 \qquad
 \mathcal P_{\rm AS}=\mathcal P_c+\sum_{i=1}^{N_p}\mathcal P_i
                     =B_{\rm AS}A_h,
 \label{eq:additiveschwarz}
\end{equation}
where
\begin{equation*}
 B_{\rm AS}=P_UA_U^{-1}P_U^{\trans}
       +P_{\Z}A_{\Z}^{-1}P_{\Z}^{\trans}
       +\sum_{i=1}^{N_p}R_i^{\trans}A_i^{-1}R_i.
\end{equation*}
The sum \(\mathcal P_c=B_HA_h\) preserves the additive structure of the two coarse
corrections in \cref{alg:twolevel}. To compare their symmetric composition with
\eqref{eq:additiveschwarz}, for the coloring \(\{\mathcal I_j\}_{j=1}^{N_c}\)
introduced in Section \ref{sec:Vertex-patch}, set
\(
 \mathcal H_j=\sum_{i\in\mathcal I_j}\mathcal P_i.
\)
For a patch \(\omega_i\), let
\(\mathcal N(i)=\{a:\operatorname{supp}(A_{h,a})\cap\omega_i\neq\emptyset\}\)
and define the enlarged-patch energy
\begin{equation*}
 a_{\omega_i}(\bm v_h,\bm v_h)
 =\sum_{a\in\mathcal N(i)}a_{\lambda,a}(\bm v_h,\bm v_h).
\end{equation*}
For each color \(j\), define
\(
m_j(a)
:=
\#\{\,i\in\mathcal I_j: a\in\mathcal N(i)\,\}\) and
\(C_{\rm col}
:=
\max_{j,a} m_j(a).
\)
Thus \(m_j(a)\) counts the number of patches of color \(j\)
whose enlarged neighborhoods contain the vertex contribution \(a\).
The local representation \(A_h=\sum_a A_{h,a}\), with positive-semidefinite
vertex contributions of fixed stencil width, and the bounded overlap of one color give
\begin{equation}
 0\leq\mathcal H_j\leq C_{\rm col}I
 \quad\hbox{in the \(A_h\)-inner product}.
 \label{eq:color-upper-bound}
\end{equation}
Indeed, the variational characterization of the local projection gives
\begin{align*}
 \norm{\mathcal P_i \bm v_h}_{A_h}=\sup_{0\neq \bm w_i\in\V_i}
   \frac{a_{\lambda,h}(\bm v_h,\bm w_i)}{\norm{\bm w_i}_{A_h}}\leq a_{\omega_i}(\bm v_h,\bm v_h)^{1/2},
\end{align*}
which follows by applying Cauchy--Schwarz to the positive-semidefinite local
forms with indices in \(\mathcal N(i)\); all other local forms vanish on \(\bm w_i\). By this inequality, we have
\begin{equation*}
 \sum_{i\in\mathcal I_j}\norm{\mathcal P_i \bm v_h}_{A_h}^2
 \leq \sum_a m_j(a)a_{\lambda,a}(\bm v_h,\bm v_h)
 \leq C_{\rm col}\norm{\bm v_h}_{A_h}^2.
\end{equation*}
Moreover, since the \(\mathcal P_i\) are orthogonal projections,
\(
 (\mathcal H_j\bm v_h,\bm v_h)_{A_h}
 =\sum_{i\in\mathcal I_j}\norm{\mathcal P_i \bm v_h}_{A_h}^2,
\)
which proves \eqref{eq:color-upper-bound}.  
The constant is purely an overlap count;
it depends on the stencil, patch enlargement, and coloring, but not on \(h\) or the
Lam\'e parameters. When the same-color enlarged neighborhoods are disjoint,
\(C_{\rm col}=1\).
Together with contractivity of the two global orthogonal projections, this gives the
explicit additive upper bound
\begin{equation}
 \begin{split}
 a_{\lambda,h}(\mathcal P_{\rm AS}\bm v_h,\bm v_h)
 &=\norm{\mathcal P_U\bm v_h}_{A_h}^2
   +\norm{\mathcal P_{\Z}\bm v_h}_{A_h}^2
   +\sum_{j=1}^{N_c}\sum_{i\in\mathcal I_j}\norm{\mathcal P_i\bm v_h}_{A_h}^2\\
 &\leq (2+N_cC_{\rm col})\norm{\bm v_h}_{A_h}^2.
 \end{split}
 \label{eq:additive-upper-bound}
\end{equation}
For the three-dimensional patches used below, the one-dimensional cell-index
sets are \(\{i-1,i,i+1\}\), while the energy at vertex \(a\) involves
\(\{a-1,a\}\).  Intersection requires
\(i\in\{a-2,a-1,a,a+1\}\), which contains at most two indices of each
modulo-three color.  Hence \(C_{\rm col}\leq 2^3=8\), and
\(\omega_s=0.24<2/8\) is admissible.  In two dimensions the symmetric
four-cell intervals give five candidate indices; modulo-five coloring yields
\(C_{\rm col}=1\) and admits \(\omega_s=1\).  Boundary truncation only reduces
these counts.

Choose \(0<\omega_s<2/C_{\rm col}\) and define the forward error propagator
\begin{equation}
 E_S=(I-\omega_s\mathcal H_{N_c})\cdots
     (I-\omega_s\mathcal H_1).
 \label{eq:forwarderror}
\end{equation}
With the fixed coloring above, \(N_c\) and \(C_{\rm col}\) depend on the stencil
and patch geometry.  We use the \(a_{\lambda,h}\)-adjoint in comparing this
propagator with the symmetric preconditioner.

\begin{lemma}[Comparison of split additive and symmetric hybrid corrections]
\label{lem:hybridcomparison}
There are constants \(c_G,C_G>0\), depending only on \(N_c\),
\(C_{\rm col}\), and \(\omega_s\), such that
\begin{equation}
 c_G\,a_{\lambda,h}(\mathcal P_{\rm AS}\bm v_h,\bm v_h)
 \leq a_{\lambda,h}(B_{\rm TL}A_h\bm v_h,\bm v_h)
 \leq C_G\,a_{\lambda,h}(\mathcal P_{\rm AS}\bm v_h,\bm v_h)
 \label{eq:hybridcomparison}
\end{equation}
for every \(\bm v_h\in \bm U_h\).  The constants are independent of \(h\), \(\lambda\),
and \(\mu\) for fixed \(N_c\), \(C_{\rm col}\), and \(\omega_s\).
\end{lemma}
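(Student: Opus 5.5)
The plan is to work entirely with error propagators in the \(a_{\lambda,h}\)-inner product, writing \(\norm{\cdot}=\norm{\cdot}_{A_h}\). Here \(\mathcal P_c=\mathcal P_U+\mathcal P_{\Z}\) and \(\mathcal H_j\) are the operators defined before the lemma, and \(E_S\) is the forward propagator \eqref{eq:forwarderror}.

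\textbf{Step 1: factor the error propagator.} One forward colored sweep from zero gives \(I-SA_h=E_S\), and the \(a_{\lambda,h}\)-adjoint of \(E_S\) is \(E_S^*=I-S^{\trans}A_h\). Expanding \eqref{eq:twoleveloperator} with \eqref{eq:symmetricsmoother} gives
\[
 I-B_{\mathrm{TL}}A_h=E_S^*(I-\mathcal P_c)E_S ,
\]
and therefore
\[
 a_{\lambda,h}(B_{\mathrm{TL}}A_h\bm v_h,\bm v_h)
 =\bigl(\norm{\bm v_h}^2-\norm{E_S\bm v_h}^2\bigr)
  +a_{\lambda,h}(\mathcal P_cE_S\bm v_h,E_S\bm v_h).
\]
Both terms are nonnegative. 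For the first, note that \(0\le\mathcal H_j\le C_{\rm col}I\) by \eqref{eq:color-upper-bound} and \(0<\omega_s<2/C_{\rm col}\), so each factor \(I-\omega_s\mathcal H_j\) is an \(a_{\lambda,h}\)-contraction. The second is nonnegative because \(\mathcal P_c\ge0\).

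\textbf{Step 2: the smoothing term.} Set \(\bm w_0=\bm v_h\) and \(\bm w_j=(I-\omega_s\mathcal H_j)\bm w_{j-1}\). Since \(\mathcal H_j\) is \(a_{\lambda,h}\)-selfadjoint,
\[
 \norm{\bm w_{j-1}}^2-\norm{\bm w_j}^2
 =\omega_s\,a_{\lambda,h}\bigl((2I-\omega_s\mathcal H_j)\mathcal H_j\bm w_{j-1},\bm w_{j-1}\bigr).
\]
This quantity lies between \(\omega_s(2-\omega_sC_{\rm col})\) and \(2\omega_s\) times \(a_{\lambda,h}(\mathcal H_j\bm w_{j-1},\bm w_{j-1})\). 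Telescoping over \(j\), the smoothing term is equivalent to \(\Sigma_w:=\sum_j a_{\lambda,h}(\mathcal H_j\bm w_{j-1},\bm w_{j-1})\).

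To compare \(\Sigma_w\) with \(\Sigma_v:=\sum_j a_{\lambda,h}(\mathcal H_j\bm v_h,\bm v_h)\), I would use the identity \(\bm w_{j-1}=\bm v_h-\omega_s\sum_{k<j}\mathcal H_k\bm w_{k-1}\). I would also use the seminorm \(a_{\lambda,h}(\mathcal H_j\cdot,\cdot)^{1/2}\le C_{\rm col}^{1/2}\norm{\cdot}\), together with \(\norm{\mathcal H_k\bm w}\le C_{\rm col}^{1/2}a_{\lambda,h}(\mathcal H_k\bm w,\bm w)^{1/2}\). With these, the triangle inequality gives both \(\Sigma_w\le C\Sigma_v\) and \(\Sigma_v\le C\Sigma_w\). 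The constants depend only on \(N_c\), \(C_{\rm col}\) and \(\omega_s\); they come from a finite Gronwall-type recursion over the \(N_c\) colors. This recursion is the step I expect to be the most delicate bookkeeping. It is the standard strengthened argument for multiplicative Schwarz, but here it is carried out at the level of colors, so no strengthened Cauchy--Schwarz between colors is needed beyond \eqref{eq:color-upper-bound}.

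\textbf{Step 3: the coarse term.} Use the seminorm \(|\bm x|_c:=a_{\lambda,h}(\mathcal P_c\bm x,\bm x)^{1/2}\le\sqrt2\,\norm{\bm x}\), which holds because \(\mathcal P_U\) and \(\mathcal P_{\Z}\) are orthogonal projections. Also \(\norm{\bm v_h-E_S\bm v_h}\le\omega_s\sum_k\norm{\mathcal H_k\bm w_{k-1}}\le C\,\Sigma_w^{1/2}\). The triangle inequality then gives
\[
 |E_S\bm v_h|_c\le|\bm v_h|_c+C\Sigma_w^{1/2},
 \qquad
 |\bm v_h|_c\le|E_S\bm v_h|_c+C\Sigma_w^{1/2}.
\]

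\textbf{Step 4: combine.} The pieces are assembled using \(a_{\lambda,h}(\mathcal P_{\rm AS}\bm v_h,\bm v_h)=|\bm v_h|_c^2+\Sigma_v\) together with the equivalence \(\Sigma_w\simeq\Sigma_v\) from Step 2.

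For the upper bound, Step 1 gives
\[
 a_{\lambda,h}(B_{\mathrm{TL}}A_h\bm v_h,\bm v_h)\le C\Sigma_w+2|\bm v_h|_c^2+C\Sigma_w\le C_G\bigl(|\bm v_h|_c^2+\Sigma_v\bigr).
\]

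For the lower bound, Step 1 and Step 3 give
\[
 |\bm v_h|_c^2+\Sigma_v\le 2|E_S\bm v_h|_c^2+C\Sigma_w
 \le c_G^{-1}\,a_{\lambda,h}(B_{\mathrm{TL}}A_h\bm v_h,\bm v_h).
\]

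No constant involves \(h\), \(\lambda\), or \(\mu\), since only \eqref{eq:color-upper-bound}, projection contractivity and \(\omega_s\) enter.
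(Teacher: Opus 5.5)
Your proposal is correct and follows essentially the same route as the paper. Both arguments use the factorization \(I-B_{\rm TL}A_h=E_S^\dagger(I-\mathcal P_c)E_S\), the colorwise energy-drop bounds, and the finite recursion comparing the color seminorms at \(\bm e_{j-1}\) and at \(\bm v_h\). Both then compare the coarse seminorm at \(\bm v_h\) and at \(E_S\bm v_h\) by the triangle inequality, using that \(\norm{\bm v_h-E_S\bm v_h}_{A_h}^2\) is bounded by the smoothing energy drop. Your direct assembly of the lower bound in Step 4 is a slightly cleaner version of the paper's \(\theta\)-weighting step, and its constants likewise depend only on \(N_c\), \(C_{\rm col}\), and \(\omega_s\).
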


\begin{proof}
For \(\bm e_0=\bm v_h\), set
\(\bm c_j=\omega_s\mathcal H_j\bm e_{j-1}\) and
\(\bm e_j=\bm e_{j-1}-\bm c_j\).  Since
\(0\leq\mathcal H_j\leq C_{\rm col}I\) and \(\|\mathcal H_j\bm w\|_{A_h}^2
\le C_{\rm col}(\mathcal H_j\bm w,\bm w)_{A_h}
\), the colorwise energy decrease satisfies
\begin{equation}
 \omega_s(2-\omega_s C_{\rm col})
 \sum_{i\in\mathcal I_j}\norm{\mathcal P_i\bm e_{j-1}}_{A_h}^2
 \leq \norm{\bm e_{j-1}}_{A_h}^2-\norm{\bm e_j}_{A_h}^2\leq
 2\omega_s\sum_{i\in\mathcal I_j}
 \norm{\mathcal P_i\bm e_{j-1}}_{A_h}^2.
 \label{eq:color-energy-drop}
\end{equation}
Put \(d_S(\bm v_h)=\norm{\bm v_h}_{A_h}^2-\norm{E_S\bm v_h}_{A_h}^2\),
\(x_j=\norm{\mathcal H_j^{1/2}\bm e_{j-1}}_{A_h}\), and
\(y_j=\norm{\mathcal H_j^{1/2}\bm v_h}_{A_h}\). The identity
\(
\bm e_{j-1}
=
\bm v_h-\omega_s\sum_{\ell<j}\mathcal H_\ell\bm e_{\ell-1}
\),
together with its rearrangement, gives
\begin{equation}
 x_j\leq y_j+\omega_sC_{\rm col}\sum_{\ell<j}x_\ell,
 \qquad
 y_j\leq x_j+\omega_sC_{\rm col}\sum_{\ell<j}x_\ell.
 \label{eq:finite-color-recursion}
\end{equation}
Indeed, \(\norm{\mathcal H_j^{1/2}\mathcal H_\ell^{1/2}}_{A_h}
\leq C_{\rm col}\), and therefore
\[
 \norm{\mathcal H_j^{1/2}\mathcal H_\ell\bm e_{\ell-1}}_{A_h}
 \leq C_{\rm col}\norm{\mathcal H_\ell^{1/2}\bm e_{\ell-1}}_{A_h}
 =C_{\rm col}x_\ell.
\]
Finite induction over the fixed number of colors therefore gives
\begin{equation}
 K_c^{-1}\sum_{j=1}^{N_c}y_j^2
 \leq\sum_{j=1}^{N_c}x_j^2
 \leq K_c\sum_{j=1}^{N_c}y_j^2,
 \label{eq:finite-color-norm-equivalence}
\end{equation}
with \(K_c\) depending only on \(N_c\) and \(\omega_sC_{\rm col}\).  Since
\(y_j^2=\sum_{i\in\mathcal I_j}\norm{\mathcal P_i\bm v_h}_{A_h}^2\),
\eqref{eq:color-energy-drop} and \eqref{eq:finite-color-norm-equivalence} prove
\begin{align}
 C_c^{-1}\sum_{i=1}^{N_p}\norm{\mathcal P_i\bm v_h}_{A_h}^2
 &\leq d_S(\bm v_h)
 \leq C_c\sum_{i=1}^{N_p}\norm{\mathcal P_i\bm v_h}_{A_h}^2,
 \label{eq:sweepadditivecomparison}\\
 \norm{\bm v_h-E_S\bm v_h}_{A_h}^2&\leq C_E d_S(\bm v_h),
 \label{eq:sweepcorrectionbound}
\end{align}
where \(C_c\) and \(C_E\) depend only on \(N_c,C_{\rm col},\omega_s\).
Indeed, the second bound follows from
\(\norm{\sum_j\bm c_j}_{A_h}^2\leq N_c\sum_j\norm{\bm c_j}_{A_h}^2\),
\(\norm{\bm c_j}_{A_h}^2\leq\omega_s^2C_{\rm col}
(\mathcal H_j\bm e_{j-1},\bm e_{j-1})_{A_h}\), and the lower bound in
\eqref{eq:color-energy-drop}.

Let \(E_S^\dagger\) denote the \(A_h\)-adjoint of \(E_S\).  From
\eqref{eq:twoleveloperator},
\begin{equation}
 B_{\rm TL}A_h
 =I-E_S^\dagger(I-\mathcal P_c)E_S.
 \label{eq:hybriderroridentity}
\end{equation}
Since \(\mathcal P_U\) and \(\mathcal P_{\Z}\) are orthogonal projections,
\(\norm{\mathcal P_c}_{A_h}\leq2\), and
\begin{equation}
 a_{\lambda,h}(B_{\rm TL}A_h\bm v_h,\bm v_h)
 =d_S(\bm v_h)+\norm{\mathcal P_UE_S\bm v_h}_{A_h}^2
             +\norm{\mathcal P_{\Z}E_S\bm v_h}_{A_h}^2.
 \label{eq:hybridrayleigh}
\end{equation}
Write \(\bm r_h=\bm v_h-E_S\bm v_h\), and set
\begin{equation*}
 X_c(\bm w_h)=\norm{\mathcal P_U\bm w_h}_{A_h}^2
          +\norm{\mathcal P_{\Z}\bm w_h}_{A_h}^2.
\end{equation*}
Projection contractivity and \eqref{eq:sweepcorrectionbound} give
\begin{equation*}
 X_c(\bm v_h)\leq2X_c(E_S\bm v_h)+4C_Ed_S(\bm v_h),
 \qquad
 X_c(E_S\bm v_h)\leq2X_c(\bm v_h)+4C_Ed_S(\bm v_h).
\end{equation*}
The second inequality bounds the right-hand side of \eqref{eq:hybridrayleigh} by
\((1+4C_E)d_S(\bm v_h)+2X_c(\bm v_h)\).  Conversely, the first inequality and
\(X_c(E_S\bm v_h)\geq0\) give, for every \(\theta\in(0,1]\),
\begin{equation*}
 d_S(\bm v_h)+X_c(E_S\bm v_h)
 \geq(1-2\theta C_E)\,d_S(\bm v_h)+\frac{\theta}{2}X_c(\bm v_h),
\end{equation*}
and \(\theta=\min\{1,(4C_E)^{-1}\}\) makes both coefficients positive.
Together with \eqref{eq:sweepadditivecomparison}, the right-hand side of
\eqref{eq:hybridrayleigh} is therefore uniformly equivalent to
\begin{equation*}
 \sum_i\norm{\mathcal P_i\bm v_h}_{A_h}^2
 +\norm{\mathcal P_U\bm v_h}_{A_h}^2
 +\norm{\mathcal P_{\Z}\bm v_h}_{A_h}^2
 =a_{\lambda,h}(\mathcal P_{\rm AS}\bm v_h,\bm v_h).
\end{equation*}
This proves \eqref{eq:hybridcomparison}.
\end{proof}

The preceding lemma also proves symmetry and positivity.  Indeed,
\eqref{eq:hybriderroridentity} is self-adjoint in the \(A_h\)-inner product, and the patch spaces span \(\bm U_h\), so
the right-hand side of \eqref{eq:hybridrayleigh} is positive for every nonzero
\(\bm v_h\).  Hence \(B_{\rm TL}\) is symmetric positive definite and may be used with
conjugate gradients.

\subsection{Parameter-uniform condition-number bound}

Kernel stability controls the limiting subspace; the complementary component must be
controlled through the volumetric operator.  Let \(Q_h^D:=\range(D_h)\subseteq Q_h^v\), equipped with the Euclidean norm
associated with \eqref{eq:qh}, and define
\begin{equation}
 \beta_h=
 \inf_{0\neq q_h\in Q_h^D}\ 
 \sup_{0\neq \bm v_h\in \bm U_h}
 \frac{q_h^{\trans}D_h\bm v_h}
      {s_h(\bm v_h,\bm v_h)^{1/2}\norm{q_h}_2}.
 \label{eq:auxiliaryinfsup}
\end{equation}
Since \(D_h:\bm U_h\to Q_h^D\) is surjective and \(s_h\) is
coercive, finite dimensionality implies \(\beta_h>0\) on every fixed grid. The signed-incidence formula and \eqref{eq:shear-jump-equivalence} also give
\begin{equation}
 q_h(\bm v_h,\bm v_h)\leq C_Ds_h(\bm v_h,\bm v_h),
 \label{eq:div-bounded-by-shear}
\end{equation}
with \(C_D\) independent of \(h\).  Indeed,
\eqref{eq:da-half-face} and the uniform Cartesian scalings
\(|e|\simeq h^{d-1}\), \(|\D_a|\simeq h^d\), and \(h_e\simeq h\) imply
\begin{equation*}
 |(D_h\bm v_h)_a|^2
 \leq C h^{d-2}
 \sum_{e\in\F_h^{\mathrm{pr},1/2}, e\subset\D_a}
       |\jump{\bm v_h}_e|^2.
\end{equation*}
Each half-face occurs in only a dimension-dependent number of interaction regions.
Summing over \(a\), using
\(\norm{\jump{\bm v_h}}_{L^2(e)}^2=|e||\jump{\bm v_h}_e|^2\), and then applying
\eqref{eq:shear-jump-equivalence} proves \eqref{eq:div-bounded-by-shear} with a
mesh-independent constant.

\begin{theorem}[Robustness with respect to the Lam\'e ratio]
\label{thm:uniformcondition}
Under \cref{ass:discrete-stability}, fix the nested grid pair and coloring
defined above.  The preconditioner in \cref{alg:twolevel}, with
\(0<\omega_s<2/C_{\rm col}\), satisfies
\begin{equation}
 \kappa(B_{\rm TL}A_h)\leq C_{\rm TL}(h,H),
 \label{eq:uniformcondition}
\end{equation}
where \(C_{\rm TL}(h,H)\) depends on the coarsening ratio, patch enlargement,
finite-color constants, \(C_U\), \(C_{\Z}(h,H)\), and \(\beta_h^{-1}\), and is
independent of \(\lambda/\mu\).
Here \(\kappa(B_{\rm TL}A_h)\) denotes the ratio of the extreme eigenvalues of the
symmetric positive definite matrix
\(A_h^{1/2}B_{\rm TL}A_h^{1/2}\), which is similar to \(B_{\rm TL}A_h\).
\end{theorem}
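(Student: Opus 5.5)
By \cref{lem:hybridcomparison}, it suffices to bound the spectrum of the additive operator \(\mathcal P_{\rm AS}=B_{\rm AS}A_h\) uniformly in \(t=\lambda/\mu\). Then \(\kappa(B_{\rm TL}A_h)\leq (C_G/c_G)\,\kappa(\mathcal P_{\rm AS})\).

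The upper bound \(\lambda_{\max}(\mathcal P_{\rm AS})\leq 2+N_cC_{\rm col}\) is already \eqref{eq:additive-upper-bound}. For the lower bound I will use the standard characterization
\[
a_{\lambda,h}(\mathcal P_{\rm AS}^{-1}\bm v_h,\bm v_h)=\inf_{\bm v_h=\bm v_U+\bm v_{\Z}+\sum_i\bm v_i}\Bigl(\norm{\bm v_U}_{A_h}^2+\norm{\bm v_{\Z}}_{A_h}^2+\sum_i\norm{\bm v_i}_{A_h}^2\Bigr).
\]
So I need, for each \(\bm v_h\), a decomposition whose total \(a_{\lambda,h}\)-energy is at most \(C_0\,a_{\lambda,h}(\bm v_h,\bm v_h)\) with \(C_0\) independent of \(t\). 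Working with the representation \eqref{eq:exactnearlysingularform} and \eqref{eq:slambdaequivalence}, the energy of a component \(\bm w\) is \(\mu(s_{\lambda,h}+tq_h)(\bm w,\bm w)\simeq \mu(s_h(\bm w,\bm w)+t\norm{D_h\bm w}_2^2)\). Hence it suffices to control both the shear part and \(t\) times the volumetric part of each piece.

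\textbf{Decomposition, in the style of \cite{leewuxuzikatanov2007}.} Given \(\bm v_h\), I first split off its volumetric part using \(\beta_h\). From \eqref{eq:auxiliaryinfsup} and surjectivity of \(D_h\) onto \(Q_h^D\), there is \(\bm w_h\) with \(D_h\bm w_h=D_h\bm v_h\) and \(s_h(\bm w_h,\bm w_h)\leq\beta_h^{-2}\norm{D_h\bm v_h}_2^2\). This \(\bm w_h\) is the \(s_h\)-minimal-norm preimage. I also need \(s_h(\bm w_h,\bm w_h)\lesssim s_h(\bm v_h,\bm v_h)\); this follows because \(\norm{D_h\bm v_h}_2^2\leq C_Ds_h(\bm v_h,\bm v_h)\) by \eqref{eq:div-bounded-by-shear}, at the cost of a factor \(\beta_h^{-2}C_D\).

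Then set \(\bm z_h=\bm v_h-\bm w_h\in\Z_h\), so that \(s_h(\bm z_h,\bm z_h)\lesssim(1+\beta_h^{-2}C_D)s_h(\bm v_h,\bm v_h)\).

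I decompose \(\bm w_h=\bm w_U+\sum_i\bm w_i\) by \cref{lem:shear-decomposition}. Since the volumetric term of each piece satisfies \(q_h(\bm w_\bullet,\bm w_\bullet)\leq C_D s_h(\bm w_\bullet,\bm w_\bullet)\), the total \(t\)-weighted volumetric energy of the pieces is bounded by \(tC_DC_U s_h(\bm w_h,\bm w_h)\leq tC_DC_U\beta_h^{-2}\norm{D_h\bm v_h}_2^2\). This is controlled by \(\mu^{-1}\) times the volumetric part of \(a_{\lambda,h}(\bm v_h,\bm v_h)\), by the lower bound in \eqref{eq:reduced-equivalence}.

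I decompose \(\bm z_h=\bm z_{\Z}+\sum_i\bm z_i\) by \cref{lem:kernel-decomposition}, with all pieces in \(\Z_h\). Their energies are then pure shear, bounded by \(C_{\Z}(h,H)s_h(\bm z_h,\bm z_h)\) with no \(t\)-dependence, via \eqref{eq:coarsekernelenergy} and \eqref{eq:slambdaequivalence}.

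Finally I merge the patch pieces by setting \(\bm v_i=\bm w_i+\bm z_i\), use \((x+y)^2\leq2x^2+2y^2\), and take \(\bm v_U=\bm w_U\), \(\bm v_{\Z}=\bm z_{\Z}\).

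\textbf{Collecting the bounds.} The total energy is at most a constant times \(\mu s_h(\bm v_h,\bm v_h)+\lambda\norm{D_h\bm v_h}_2^2\). By \eqref{eq:reduced-equivalence}, this is \(\lesssim a_{\lambda,h}(\bm v_h,\bm v_h)\). The resulting constant is
\[
C_0\simeq C_U(1+C_D)\beta_h^{-2}+C_{\Z}(h,H)(1+\beta_h^{-2}C_D).
\]
This gives \(\lambda_{\min}(\mathcal P_{\rm AS})\geq C_0^{-1}\), and combining with \cref{lem:hybridcomparison} yields \eqref{eq:uniformcondition}.

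\textbf{Main obstacle.} The delicate step is the \(t\)-weighted volumetric term of the shear-stable pieces \(\bm w_U,\bm w_i\). Neither \(\V_H^U\) nor the patch pieces preserve \(\Z_h\), so their \(D_h\)-images are not small. The argument works only because \(\bm w_h\) was chosen with \(s_h\)-energy bounded by \(\norm{D_h\bm v_h}_2^2\) itself, rather than by \(s_h(\bm v_h,\bm v_h)\). This is exactly where \(\beta_h^{-1}\) enters, and it is why the bound is only per fixed grid pair. I will check carefully that the \(s_h\)-minimal preimage gives both estimates needed above.
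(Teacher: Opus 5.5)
Your proposal is correct and follows essentially the paper's argument. You split \(\bm v_h=\bm z_h+\bm w_h\) with \(\bm z_h\in\Z_h\) and \(\bm w_h\) controlled by the inf--sup constant, decompose \(\bm z_h\) by \cref{lem:kernel-decomposition} and \(\bm w_h\) by \cref{lem:shear-decomposition}, merge the patch pieces, apply the additive Schwarz identity with \eqref{eq:additive-upper-bound}, and transfer to \(B_{\rm TL}\) via \cref{lem:hybridcomparison}. The only difference is cosmetic: the paper treats \(t\le1\) separately and uses the \(s_{\lambda,h}\)-orthogonal complement with \(\beta_{\lambda,h}\ge C_s^{-1/2}\beta_h\), whereas you use the \(s_h\)-minimal preimage with \(\beta_h\) uniformly in \(t\); as an \(s_h\)-orthogonal projection, that preimage even gives \(s_h(\bm w_h,\bm w_h)\le s_h(\bm v_h,\bm v_h)\) without the factor \(\beta_h^{-2}C_D\).
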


\begin{proof}
Multiplication of the fine form and all its exact restrictions by the same positive
scalar leaves the preconditioned operator unchanged.  We may therefore study
\(s_{\lambda,h}+tq_h\), where \(t=\lambda/\mu\).  If \(0\leq t\leq1\),
\eqref{eq:slambdaequivalence} and \eqref{eq:div-bounded-by-shear} make this form
uniformly equivalent to \(s_h\).  The decomposition of
\cref{lem:shear-decomposition} and the additive upper bound
\eqref{eq:additive-upper-bound} then give uniform lower and upper spectral bounds for
\(B_{\rm AS}A_h\).

Let now \(t\geq1\), set \(\epsilon=t^{-1}\), and scale to the form
\(q_h+\epsilon s_{\lambda,h}\), whose kernel at \(\epsilon=0\) is \(\Z_h\).
Let \(\beta_{\lambda,h}\) be \eqref{eq:auxiliaryinfsup} with \(s_h\) replaced by
\(s_{\lambda,h}\).  From \eqref{eq:slambdaequivalence},
\begin{equation}
 \beta_{\lambda,h}\geq C_s^{-1/2}\beta_h>0.
 \label{eq:betalambda-lower}
\end{equation}
On the \(s_{\lambda,h}\)-orthogonal complement \(\bm W_{\lambda,h}\) of \(\Z_h\),
the singular-value characterization of \(\beta_{\lambda,h}\) gives
\begin{equation}
 s_{\lambda,h}(\bm w_h,\bm w_h)
 \leq\beta_{\lambda,h}^{-2}q_h(\bm w_h,\bm w_h)
 \qquad\forall \bm w_h\in \bm W_{\lambda,h}.
 \label{eq:complement-control}
\end{equation}
For \(\bm v_h=\bm z_h+\bm w_h\), with \(\bm z_h\in\Z_h\) and
\(\bm w_h\in\bm W_{\lambda,h}\), apply \cref{lem:kernel-decomposition} to \(\bm z_h\)
and \cref{lem:shear-decomposition} to \(\bm w_h\):
\begin{equation*}
 \bm z_h=\bm z_{\Z}+\sum_i \bm z_i,
 \qquad \bm w_h=\bm w_U+\sum_i\bm w_i,
\end{equation*}
where \(\bm z_{\Z}\in\V_H^{\Z}\), \(\bm z_i\in\V_i\cap\Z_h\),
\(\bm w_U\in\V_H^U\), and \(\bm w_i\in\V_i\).  
Since the decomposition
\(\bm v_h=\bm z_h+\bm w_h\) is
\(s_{\lambda,h}\)-orthogonal and
\(\bm z_h\in\Z_h\), we have
\[
q_h(\bm v_h,\bm v_h)=q_h(\bm w_h,\bm w_h),
\quad
s_{\lambda,h}(\bm v_h,\bm v_h)
=
s_{\lambda,h}(\bm z_h,\bm z_h)
+
s_{\lambda,h}(\bm w_h,\bm w_h).
\]
Moreover, since
\(\bm z_{\Z},\bm z_i\in\Z_h\),
we have \(q_h(\bm z_{\Z},\bm z_{\Z})=0\).
Setting \(\bm v_i=\bm z_i+\bm w_i\) gives
\(q_h(\bm v_i,\bm v_i)=q_h(\bm w_i,\bm w_i)\).
The two stability estimates in
\cref{lem:shear-decomposition,lem:kernel-decomposition}, together with
\eqref{eq:slambdaequivalence}, \eqref{eq:div-bounded-by-shear}, and
\eqref{eq:complement-control}, therefore yield
\begin{align}
 &\epsilon s_{\lambda,h}(\bm z_{\Z},\bm z_{\Z})
 +\bigl(q_h(\bm w_U,\bm w_U)+\epsilon s_{\lambda,h}(\bm w_U,\bm w_U)\bigr)
 +\sum_i\bigl(q_h(\bm v_i,\bm v_i)+\epsilon s_{\lambda,h}(\bm v_i,\bm v_i)\bigr)
 \notag\\
 &\hspace{2cm}\leq
 C(h,H)\bigl(q_h(\bm v_h,\bm v_h)+\epsilon s_{\lambda,h}(\bm v_h,\bm v_h)\bigr),
 \qquad 0<\epsilon\leq1.
 \label{eq:full-energy-stable-decomposition}
\end{align}
The constant depends on \(C_U\), \(C_{\Z}(h,H)\), \(\beta_h^{-1}\), and the
equivalence constants, but not on \(\epsilon\).  The exact-solver additive Schwarz
identity \cite{xu1992,xuzikatanov2002} therefore combines \eqref{eq:full-energy-stable-decomposition}
with \eqref{eq:additive-upper-bound} to yield
\begin{equation}
 \kappa(B_{\rm AS}A_h)\leq C_{\rm AS}(h,H),
 \label{eq:additivebound}
\end{equation}
independently of \(t\).  Finally, \cref{lem:hybridcomparison} transfers this bound
to the symmetric hybrid operator and proves \eqref{eq:uniformcondition}.
\end{proof}

The standard PCG energy-error estimate consequently gives a contraction factor
controlled by \(C_{\rm TL}(h,H)\) uniformly in the Lam\'e ratio.

\section{Numerical experiments}
\label{sec:numerics}

We present two- and three-dimensional examples to demonstrate the effectiveness of
the two-subspace preconditioner under mesh refinement and increasing
incompressibility.  In each example, an exact solution relates the solver
accuracy to the approximation error, and a comparison with the displacement
coarse space alone identifies the contribution of the kernel correction.
Material interfaces and nonhomogeneous displacement data introduce variations
in the coefficients and boundary conditions within this common framework.

We use conjugate gradients with the stopping criterion
\(\norm{A_h\boldsymbol u_k-\boldsymbol b_h}_2/
\norm{\boldsymbol b_h}_2\leq10^{-7}\).
The reported PC count is the number of applications of the symmetric
composition in \eqref{eq:twoleveloperator}, with one forward and one backward
vertex-patch sweep.  The grid ratio is \(H/h=8\), and both coarse systems are
solved by direct factorization.  We take \(\omega_s=1\) in two dimensions.
In three dimensions, modulo-three coloring in each
coordinate direction gives 27 colors.  We take \(\omega_s=0.24\), which
satisfies the sufficient damping condition
\(0<\omega_s<1/4\) obtained from the overlap bound in
\cref{lem:hybridcomparison}.

Patch factorizations are computed during setup and reused throughout the
iteration.  The PETSc implementation batches same-color patch solves and
uses MUMPS for coarse LU factorizations.  Parallel computations run on
Intel Xeon Platinum 8358 processors, with one MPI rank per active core;
the two-dimensional interface example uses a MATLAB implementation of the
same subspace corrections.
The setup time \(t_{\rm setup}\) includes fine-grid assembly, construction of
both transfers and Galerkin matrices, and all patch and coarse factorizations;
the solve time \(t_{\rm solve}\) contains the Krylov iteration and its
preconditioner applications.  Times are reported in seconds.

For each nonzero exact field
\(q\in\{\underline{\sigma},\operatorname{div}\underline{\sigma},
\bm u,\underline{\gamma}\}\), we measure the relative error
\begin{equation}
 E_q=\frac{\norm{q-q_h}_{L^2(\Omega)}}{\norm{q}_{L^2(\Omega)}}.
 \label{eq:p0error}
\end{equation}
The integrals are evaluated on interaction subcells for stress and rotation,
and on macro-elements for stress divergence and displacement.  All reported
errors are computed from the iterative solutions with the stopping criterion
above.

\subsection{A smooth nearly incompressible solution}
\label{sec:numerics-2d}

We begin with the smooth manufactured solution from
\cite[Section~3.1]{valseth2021} on \(\Omega=(0,1)^2\), with homogeneous
displacement data and
\begin{equation}
 u_1(x,y)=u_2(x,y)=\sin(\pi x)\sin(\pi y).
 \label{eq:valsethsolution}
\end{equation}
With \(\mu=1\), the body force is determined from the first two equations in
\eqref{eq:strong-mixed}.  Fixing \(\lambda/\mu=10^8\),
\cref{tab:smooth2dconv} reports the errors of the displacement and all locally
recovered fields.

\begin{table}[H]
  \centering
  \caption{Relative errors for the smooth two-dimensional problem with
  \(\lambda/\mu=10^8\) and \(H/h=8\).}
  \label{tab:smooth2dconv}
  \setlength{\tabcolsep}{5.0pt}
  \begin{tabular}{cc*{4}{cc}}
    \toprule
    \multirow{2}{*}{grid} & \multirow{2}{*}{PC calls} &
    \multicolumn{2}{c}{\(\underline{\sigma}\)} &
    \multicolumn{2}{c}{\(\operatorname{div}\underline{\sigma}\)} &
    \multicolumn{2}{c}{\(\bm u\)} &
    \multicolumn{2}{c}{\(\underline{\gamma}\)} \\
    \cmidrule(lr){3-4}\cmidrule(lr){5-6}
    \cmidrule(lr){7-8}\cmidrule(lr){9-10}
    & & error & rate & error & rate & error & rate & error & rate \\
    \midrule
    \(32^2\) & 15 & \(4.01\mathrm{e}{-2}\) & / & \(4.01\mathrm{e}{-2}\) & / & \(4.02\mathrm{e}{-2}\) & / & \(4.37\mathrm{e}{-2}\) & / \\
    \(64^2\) & 20 & \(2.00\mathrm{e}{-2}\) & 1.0 & \(2.00\mathrm{e}{-2}\) & 1.0 & \(2.01\mathrm{e}{-2}\) & 1.0 & \(2.10\mathrm{e}{-2}\) & 1.1 \\
    \(128^2\) & 28 & \(1.00\mathrm{e}{-2}\) & 1.0 & \(1.00\mathrm{e}{-2}\) & 1.0 & \(1.00\mathrm{e}{-2}\) & 1.0 & \(1.03\mathrm{e}{-2}\) & 1.0 \\
    \(256^2\) & 28 & \(5.01\mathrm{e}{-3}\) & 1.0 & \(5.01\mathrm{e}{-3}\) & 1.0 & \(5.01\mathrm{e}{-3}\) & 1.0 & \(5.07\mathrm{e}{-3}\) & 1.0 \\
    \(512^2\) & 28 & \(2.50\mathrm{e}{-3}\) & 1.0 & \(2.50\mathrm{e}{-3}\) & 1.0 & \(2.51\mathrm{e}{-3}\) & 1.0 & \(2.52\mathrm{e}{-3}\) & 1.0 \\
    \bottomrule
  \end{tabular}
\end{table}

The errors decrease at first order through \(512^2\), including the stress
and rotation recovered by local back-substitution.  The PC count increases
from 15 to 28 over the same sequence, so the refinement needed to improve
accuracy produces only a moderate increase in iteration count.

To distinguish mesh dependence from material-parameter dependence,
\cref{tab:lambdarobust2d} varies the Lam\'e ratio on each grid.  The first
five PC columns use both coarse subspaces, whereas the final column retains
only the conventional displacement correction on the \(128^2\) grid.

\begin{table}[htbp]
  \centering
  \caption{Preconditioner applications for the smooth two-dimensional problem with
  \(H/h=8\).  The last column uses only the displacement coarse space on the
  \(128^2\) grid.}
  \label{tab:lambdarobust2d}
  \setlength{\tabcolsep}{6pt}
  \begin{tabular}{cc*{5}{c}c}
    \toprule
    \multirow{2}{*}{\(\lambda/\mu\)} & \multirow{2}{*}{\(\nu\)} &
    \multicolumn{5}{c}{kernel-enriched PC calls} &
    \(\text{disp.-only}\) \\
    \cmidrule(lr){3-7}
      & & \(32^2\) & \(64^2\) & \(128^2\) & \(256^2\) & \(512^2\) & \(128^2\) \\
    \midrule
    \(1\) & \(0.2500000\) & 11 & 13 & 14 & 15 & 15 & 14 \\
    \(10^2\) & \(0.4950495\) & 19 & 22 & 25 & 27 & 27 & 27 \\
    \(10^4\) & \(0.4999500\) & 19 & 23 & 25 & 26 & 28 & 82 \\
    \(10^6\) & \(0.4999995\) & 15 & 22 & 29 & 27 & 30 & 190 \\
    \(10^8\) & \(0.499999995\) & 15 & 20 & 28 & 28 & 28 & 197 \\
    \bottomrule
  \end{tabular}
\end{table}

At each mesh resolution, the two-subspace method maintains low iteration
counts throughout the parameter range, whereas the displacement correction
alone becomes increasingly expensive near incompressibility.
On the \(128^2\) grid, both choices require 14 applications at
\(\lambda=\mu\), but at \(\lambda/\mu=10^8\) the enriched method requires
28 applications compared with 197 for the displacement correction alone.
The additional coarse space therefore becomes particularly effective as the
volumetric constraint dominates.

\subsection{Discontinuous material coefficients}
\label{sec:numerics-interface}

To combine near incompressibility with a material interface, we use the stiff
inclusion problem from \cite[Example~3]{fuzhao2025}.  On \(\Omega=(0,1)^2\),
let \(D=(1/3,2/3)^2\) and set
\begin{equation*}
 a(x,y)=\begin{cases}10^4,&(x,y)\in D,\\1,&(x,y)\notin D,\end{cases}
 \qquad \mu=a,\qquad \lambda=\rho a.
\end{equation*}
With homogeneous displacement data, the exact solution is
\begin{equation*}
 u_1(x,y)=u_2(x,y)=a(x,y)^{-1}\sin(3\pi x)\sin(3\pi y).
\end{equation*}
The trigonometric factor vanishes on \(\partial D\), so the displacement is
continuous across the interface.  The common coefficient factor in
\(\lambda\) and \(\mu\) also makes the exact stress and the scaled rotation
\(\underline{\widetilde\gamma}=2\mu\underline\gamma\) independent of \(a\).

Accordingly, we approximate \(\underline{\widetilde\gamma}_h\) by a constant
on each interaction region and use the weighted rotation space
\(\underline\Gamma_h^\mu=(2\mu)^{-1}\underline\Gamma_h\) in both the
constitutive and weak-symmetry equations.  This choice represents the jump
in the physical rotation through
\(\underline\gamma_h=(2\mu)^{-1}\underline{\widetilde\gamma}_h\), while
preserving symmetry and vertex-local elimination.  The reported rotation
error is measured for \(\underline\gamma_h\).

For this interface-fitted problem, \cref{tab:highcontrast-convergence}
reports convergence at \(\lambda/\mu=10^8\), and
\cref{tab:highcontrast-parameters} compares the two coarse-space choices
over the full parameter range.

\begin{table}[htbp]
  \centering
  \caption{Relative errors for the high-contrast problem with
  \(\lambda/\mu=10^8\), coefficient contrast \(10^4\), and \(H/h=8\).}
  \label{tab:highcontrast-convergence}
  \setlength{\tabcolsep}{5.0pt}
  \begin{tabular}{cc*{4}{cc}}
    \toprule
    \multirow{2}{*}{grid} & \multirow{2}{*}{PC calls} &
    \multicolumn{2}{c}{\(\underline{\sigma}\)} &
    \multicolumn{2}{c}{\(\operatorname{div}\underline{\sigma}\)} &
    \multicolumn{2}{c}{\(\bm u\)} &
    \multicolumn{2}{c}{\(\underline{\gamma}\)} \\
    \cmidrule(lr){3-4}\cmidrule(lr){5-6}
    \cmidrule(lr){7-8}\cmidrule(lr){9-10}
    & & error & rate & error & rate & error & rate & error & rate \\
    \midrule
    \(24^2\) & 14 & \(1.64\mathrm{e}{-1}\) & / & \(1.60\mathrm{e}{-1}\) & / & \(1.67\mathrm{e}{-1}\) & / & \(1.94\mathrm{e}{-1}\) & / \\
    \(48^2\) & 23 & \(8.06\mathrm{e}{-2}\) & 1.0 & \(8.01\mathrm{e}{-2}\) & 1.0 & \(8.10\mathrm{e}{-2}\) & 1.0 & \(8.85\mathrm{e}{-2}\) & 1.1 \\
    \(96^2\) & 35 & \(4.01\mathrm{e}{-2}\) & 1.0 & \(4.01\mathrm{e}{-2}\) & 1.0 & \(4.02\mathrm{e}{-2}\) & 1.0 & \(4.21\mathrm{e}{-2}\) & 1.1 \\
    \(192^2\) & 42 & \(2.00\mathrm{e}{-2}\) & 1.0 & \(2.00\mathrm{e}{-2}\) & 1.0 & \(2.01\mathrm{e}{-2}\) & 1.0 & \(2.05\mathrm{e}{-2}\) & 1.0 \\
    \bottomrule
  \end{tabular}
\end{table}

\begin{table}[htbp]
  \centering
  \caption{Preconditioner applications for the high-contrast problem.  The
  coefficient contrast is \(10^4\) and \(H/h=8\).  The last column uses only
  the displacement coarse space on the \(96^2\) grid.}
  \label{tab:highcontrast-parameters}
  \setlength{\tabcolsep}{6pt}
  \begin{tabular}{cc*{4}{c}c}
    \toprule
    \multirow{2}{*}{\(\lambda/\mu\)} & \multirow{2}{*}{\(\nu\)} &
    \multicolumn{4}{c}{kernel-enriched PC calls} & \(\text{disp.-only}\) \\
    \cmidrule(lr){3-6}
      & & \(24^2\) & \(48^2\) & \(96^2\) & \(192^2\) & \(96^2\) \\
    \midrule
    \(1\) & \(0.2500000\) & 12 & 16 & 17 & 19 & 16 \\
    \(10^2\) & \(0.4950495\) & 14 & 22 & 26 & 27 & 48 \\
    \(10^4\) & \(0.4999500\) & 17 & 24 & 28 & 31 & 115 \\
    \(10^6\) & \(0.4999995\) & 21 & 34 & 33 & 43 & 239 \\
    \(10^8\) & \(0.499999995\) & 14 & 23 & 35 & 42 & 350 \\
    \bottomrule
  \end{tabular}
\end{table}

The interface-fitted refinement preserves first-order convergence for all
four fields in \cref{tab:highcontrast-convergence}, including the physical
rotation recovered from the weighted space.  The material jump also leaves
the kernel correction effective in the nearly incompressible regime:
on the \(96^2\) grid at \(\lambda/\mu=10^8\), the two-subspace method
requires 35 applications, compared with 350 for the displacement correction
alone.  This contrast persists across the parameter range in
\cref{tab:highcontrast-parameters}: the two-subspace counts remain
controlled as incompressibility increases, while the displacement-only
counts rise from 16 to 350.

\subsection{A homogeneous oscillatory solution in three dimensions}
\label{sec:numerics-3d}

The next experiment extends the accuracy and robustness study to large
three-dimensional displacement systems.  On \(\Omega=(0,1)^3\), we prescribe
homogeneous Dirichlet data and take
\begin{equation}
 \bm u(x,y,z)=\lambda^{-1}\sin(3\pi x)\sin(3\pi y)\sin(3\pi z)
 \begin{pmatrix}1\\1\\1\end{pmatrix}.
 \label{eq:exactsine3d}
\end{equation}
Young's modulus is \(E=1\), with
\(\mu=E/[2(1+\nu)]\) and
\(\lambda=E\nu/[(1+\nu)(1-2\nu)]\).
The body force follows from \eqref{eq:strong-mixed}.  The factor
\(\lambda^{-1}\) keeps the volumetric stress and load bounded as
\(\lambda/\mu\) increases.  Since the exact rotation is nonzero, this
solution allows the approximation of all four mixed fields to be measured
in three dimensions.

At \(\lambda/\mu=10^8\), \cref{tab:exactsine3dconv} reports the complete
field errors together with the iteration count and computational cost.

\begin{table}[htbp]
  \centering
  \caption{Three-dimensional convergence with \(\lambda/\mu=10^8\), \(H/h=8\),
  and \(\omega_s=0.24\).  Times are in seconds.}
  \label{tab:exactsine3dconv}
  \footnotesize
  \setlength{\tabcolsep}{3.8pt}
  \begin{tabular}{ccccc*{4}{cc}}
    \toprule
    \multirow{2}{*}{grid} & \multirow{2}{*}{PC calls} &
    \multirow{2}{*}{cores} & \multirow{2}{*}{\(t_{\rm setup}\)} &
    \multirow{2}{*}{\(t_{\rm solve}\)} &
    \multicolumn{2}{c}{\(\underline{\sigma}\)} &
    \multicolumn{2}{c}{\(\operatorname{div}\underline{\sigma}\)} &
    \multicolumn{2}{c}{\(\bm u\)} &
    \multicolumn{2}{c}{\(\underline{\gamma}\)} \\
    \cmidrule(lr){6-7}\cmidrule(lr){8-9}\cmidrule(lr){10-11}\cmidrule(lr){12-13}
    & & & & & error & rate & error & rate & error & rate & error & rate \\
    \midrule
    \(16^3\) & 10 & 8 & 0.11 & 0.20 & \(3.26\mathrm{e}{-1}\) & / & \(2.92\mathrm{e}{-1}\) & / & \(3.35\mathrm{e}{-1}\) & / & \(3.62\mathrm{e}{-1}\) & / \\
    \(32^3\) & 16 & 8 & 0.67 & 1.21 & \(1.51\mathrm{e}{-1}\) & 1.1 & \(1.47\mathrm{e}{-1}\) & 1.0 & \(1.53\mathrm{e}{-1}\) & 1.1 & \(1.64\mathrm{e}{-1}\) & 1.1 \\
    \(64^3\) & 23 & 8 & 5.17 & 12.40 & \(7.41\mathrm{e}{-2}\) & 1.0 & \(7.36\mathrm{e}{-2}\) & 1.0 & \(7.44\mathrm{e}{-2}\) & 1.0 & \(7.72\mathrm{e}{-2}\) & 1.1 \\
    \(128^3\) & 26 & 64 & 7.45 & 18.01 & \(3.69\mathrm{e}{-2}\) & 1.0 & \(3.68\mathrm{e}{-2}\) & 1.0 & \(3.69\mathrm{e}{-2}\) & 1.0 & \(3.76\mathrm{e}{-2}\) & 1.0 \\
    \(256^3\) & 29 & 128 & 37.09 & 88.27 & \(1.84\mathrm{e}{-2}\) & 1.0 & \(1.84\mathrm{e}{-2}\) & 1.0 & \(1.84\mathrm{e}{-2}\) & 1.0 & \(1.86\mathrm{e}{-2}\) & 1.0 \\
    \bottomrule
  \end{tabular}
\end{table}

The refinement sequence in \cref{tab:exactsine3dconv} retains first-order
accuracy for all four fields as the displacement system grows to
50,331,648 unknowns.  Over the same sequence, the PC count increases from
10 to 29.  At the largest scale, setup and solution take 37.09 and
88.27 seconds, respectively, on 128 cores.

To examine parameter robustness on the same grid sequence,
\cref{tab:exactsine3dparam} varies \(\lambda/\mu\) and includes a
displacement-only comparison on the \(64^3\) grid.

\begin{table}[htbp]
  \centering
  \caption{Preconditioner applications for the homogeneous three-dimensional
  problem with \(H/h=8\) and \(\omega_s=0.24\).  The last column uses only the
  displacement coarse space on the \(64^3\) grid.}
  \label{tab:exactsine3dparam}
  \setlength{\tabcolsep}{5.2pt}
  \begin{tabular}{cc*{5}{c}c}
    \toprule
    \multirow{2}{*}{\(\lambda/\mu\)} & \multirow{2}{*}{\(\nu\)} &
    \multicolumn{5}{c}{kernel-enriched PC calls} & \(\text{disp.-only}\) \\
    \cmidrule(lr){3-7}
      & & \(16^3\) & \(32^3\) & \(64^3\) & \(128^3\) & \(256^3\) & \(64^3\) \\
    \midrule
    \(1\) & \(0.2500000\) & 6 & 10 & 12 & 14 & 15 & 12 \\
    \(10^2\) & \(0.4950495\) & 13 & 22 & 28 & 32 & 34 & 27 \\
    \(10^4\) & \(0.4999500\) & 15 & 29 & 35 & 42 & 49 & 92 \\
    \(10^6\) & \(0.4999995\) & 12 & 22 & 30 & 34 & 36 & 159 \\
    \(10^8\) & \(0.499999995\) & 10 & 16 & 23 & 26 & 29 & 135 \\
    \bottomrule
  \end{tabular}
\end{table}

The parameter sweep in \cref{tab:exactsine3dparam} shows that the
two-subspace iteration counts do not grow persistently with the Lam\'e
ratio.  On the \(256^3\) grid, the largest count is 49 at
\(\lambda/\mu=10^4\), decreasing to 29 at \(10^8\).
The comparison on the \(64^3\) grid identifies the role of the kernel
correction: both methods require 12 applications at \(\lambda=\mu\),
whereas at \(\lambda/\mu=10^6\) the additional correction reduces the
count from 159 to 30.  Its benefit thus becomes substantial as the
volumetric penalty increases, complementing the conventional displacement
space with nearly volume-preserving error components.

\subsection{Nonhomogeneous displacement data in three dimensions}
\label{sec:numerics-boundary}

To extend the comparison from a fixed boundary to prescribed boundary
deformation, we use the unit-cube benchmark of Carstensen and
Heuer~\cite{carstensenheuer2025}.  With \(E=1\), the exact displacement is
\begin{equation}
 \bm u(x,y,z)=
 \begin{pmatrix}
   \sin(3x)\cos(3y)\cos(3z)\\
   \cos(3x)\sin(3y)\cos(3z)\\
   \cos(3x)\cos(3y)\sin(3z)
 \end{pmatrix},
 \qquad \bm u=\bm u_D\quad\hbox{on }\partial\Omega.
 \label{eq:carstensenheuer3d}
\end{equation}
The load is \(\bm f=27(\lambda+2\mu)\bm u\).  The exact rotation vanishes,
so the convergence table reports the relative errors of stress, stress
divergence, and displacement.

With the prescribed boundary values included in the load, we use the same
preconditioner and report approximation errors in
\cref{tab:nonhomogeneous3d-convergence} at
\(\lambda/\mu=10^8\), and \cref{tab:nonhomogeneous3d-parameters}
compares the coarse-space choices as the material approaches
incompressibility.

\begin{table}[H]
  \centering
  \caption{Three-dimensional convergence for nonhomogeneous Dirichlet data with
  \(\lambda/\mu=10^8\), \(H/h=8\), and \(\omega_s=0.24\).  Times are in seconds.}
  \label{tab:nonhomogeneous3d-convergence}
  \footnotesize
  \setlength{\tabcolsep}{2.3pt}
  \begin{tabular}{ccccc*{3}{cc}}
    \toprule
    \multirow{2}{*}{grid} & \multirow{2}{*}{PC calls} &
    \multirow{2}{*}{cores} & \multirow{2}{*}{\(t_{\rm setup}\)} &
    \multirow{2}{*}{\(t_{\rm solve}\)} &
    \multicolumn{2}{c}{\(\underline{\sigma}\)} &
    \multicolumn{2}{c}{\(\operatorname{div}\underline{\sigma}\)} &
    \multicolumn{2}{c}{\(\bm u\)} \\
    \cmidrule(lr){6-7}\cmidrule(lr){8-9}\cmidrule(lr){10-11}
    & & & & & error & rate & error & rate & error & rate \\
    \midrule
    \(16^3\) & 12 & 8 & 0.11 & 0.24 & \(9.93\mathrm{e}{-2}\) & / & \(9.53\mathrm{e}{-2}\) & / & \(9.86\mathrm{e}{-2}\) & / \\
    \(32^3\) & 15 & 8 & 0.67 & 1.12 & \(4.92\mathrm{e}{-2}\) & 1.0 & \(4.77\mathrm{e}{-2}\) & 1.0 & \(4.81\mathrm{e}{-2}\) & 1.0 \\
    \(64^3\) & 17 & 8 & 5.17 & 9.17 & \(2.46\mathrm{e}{-2}\) & 1.0 & \(2.38\mathrm{e}{-2}\) & 1.0 & \(2.40\mathrm{e}{-2}\) & 1.0 \\
    \(128^3\) & 20 & 64 & 7.11 & 13.92 & \(1.23\mathrm{e}{-2}\) & 1.0 & \(1.19\mathrm{e}{-2}\) & 1.0 & \(1.20\mathrm{e}{-2}\) & 1.0 \\
    \(256^3\) & 21 & 128 & 37.21 & 64.42 & \(6.14\mathrm{e}{-3}\) & 1.0 & \(5.96\mathrm{e}{-3}\) & 1.0 & \(6.11\mathrm{e}{-3}\) & 1.0 \\
    \bottomrule
  \end{tabular}
\end{table}

\begin{table}[H]
  \centering
  \caption{Preconditioner applications for the nonhomogeneous Dirichlet problem.
  Here \(H/h=8\) and \(\omega_s=0.24\).
  The last column uses only the displacement coarse space on the \(64^3\) grid.}
  \label{tab:nonhomogeneous3d-parameters}
  \setlength{\tabcolsep}{5.2pt}
  \begin{tabular}{cc*{5}{c}c}
    \toprule
    \multirow{2}{*}{\(\lambda/\mu\)} & \multirow{2}{*}{\(\nu\)} &
    \multicolumn{5}{c}{kernel-enriched PC calls} & \(\text{disp.-only}\) \\
    \cmidrule(lr){3-7}
      & & \(16^3\) & \(32^3\) & \(64^3\) & \(128^3\) & \(256^3\) & \(64^3\) \\
    \midrule
    \(1\) & \(0.2500000\) & 7 & 9 & 11 & 12 & 12 & 11 \\
    \(10^2\) & \(0.4950495\) & 15 & 21 & 25 & 26 & 28 & 23 \\
    \(10^4\) & \(0.4999500\) & 20 & 27 & 32 & 37 & 39 & 73 \\
    \(10^6\) & \(0.4999995\) & 14 & 21 & 25 & 29 & 25 & 144 \\
    \(10^8\) & \(0.499999995\) & 12 & 15 & 17 & 20 & 21 & 98 \\
    \bottomrule
  \end{tabular}
\end{table}

The three fields retain approximately first-order convergence through
\(256^3\) with the prescribed boundary deformation.  At
\(\lambda/\mu=10^8\), this refinement increases the enriched PC count
from 12 to 21.  The parameter sweep in
\cref{tab:nonhomogeneous3d-parameters} exhibits the same behavior as the
homogeneous-boundary example: the counts peak at an intermediate Lam\'e
ratio and remain controlled toward the incompressible limit.
On the \(64^3\) grid, both coarse-space choices use
11 applications when \(\lambda=\mu\), whereas at \(\lambda/\mu=10^6\)
the kernel correction reduces the count from 144 to 25.  Its contribution
in the nearly incompressible regime is therefore retained when nonzero
boundary moments enter the reduced load.

\section{Concluding remarks}

The proposed two-grid preconditioner combines the compactness of local mixed-variable
elimination with a coarse correction that preserves the discrete incompressibility
constraint.  A potential-generated displacement space complements the conventional
geometric coarse space, while symmetric vertex-patch sweeps resolve local coupling.
The reduced energy decomposition links this construction to parameter-uniform
subspace-correction estimates, and the numerical comparisons demonstrate the
substantial reduction in parameter sensitivity provided by the kernel correction.
The smooth-solution experiments retain first-order approximation in the nearly incompressible
regime and extend to more than fifty million displacement unknowns.  The
potential-based construction also provides a starting point for multilevel
extensions and compatible transfers on more general mesh hierarchies.

\end{document}